\documentclass[11pt]{amsart}

\usepackage[T1]{fontenc}
\usepackage{lmodern}
\usepackage{microtype}
\usepackage{mathtools,amssymb,mathrsfs}
\usepackage[margin=1.15in]{geometry}
\usepackage[colorlinks=true,linkcolor=blue,citecolor=blue,urlcolor=blue]{hyperref}

\newtheorem{theorem}{Theorem}[section]
\newtheorem{proposition}[theorem]{Proposition}
\newtheorem{lemma}[theorem]{Lemma}
\newtheorem{corollary}[theorem]{Corollary}
\newtheorem{remark}[theorem]{Remark}

\newtheorem*{theoremA}{Theorem A}
\newtheorem*{theoremB}{Theorem B}
\newtheorem*{theoremC}{Theorem C}
\newtheorem*{theoremD}{Theorem D}

\newcommand{\Lthree}{\mathbb L^3}
\newcommand{\HH}{\mathbb H^2}
\newcommand{\dS}{\mathbb S^2_1}

\newcommand{\Hess}{\operatorname{Hess}}
\newcommand{\tr}{\operatorname{tr}}
\newcommand{\Id}{\operatorname{Id}}
\newcommand{\cl}{\overline}

\title[Overdetermined equations on $\HH$ and $\dS$]
{Overdetermined equations and support functions on the hyperbolic and de Sitter planes}

\author{M\'arcio Batista}
\author{Iury Domingos}
\address{CPMAT -- Instituto de Matem\'atica, Universidade Federal de Alagoas, Macei\'o, AL, 57072-970, Brazil}
\email{mhbs@mat.ufal.br}
\address{Universidade Federal de Alagoas, Av. Manoel Severino Barbosa S/N, Arapiraca, AL, 57309-005, Brazil}

\email{iury.domingos@arapiraca.ufal.br}

\subjclass[2020]{Primary 35N25, 53C50; Secondary 35P15, 53A10, 53C42}
\keywords{Overdetermined problem, support function, hyperbolic plane, de Sitter plane, maximal surface, timelike minimal surface}

\begin{document}

\begin{abstract}
We study overdetermined equations on the hyperbolic and de Sitter planes by
means of the support-function representation of zero mean curvature surfaces
in Lorentz--Minkowski three-space. On the hyperbolic plane, solutions of
$\Delta u-2u=0$ give rise to branched spacelike maximal surfaces. We prove
that, on a bounded simply connected domain, nontrivial constant Dirichlet and
Neumann data force the domain to be a geodesic disk, provided that the
associated support quadric is nonlightlike; moreover, the solution is a
multiple of the hyperbolic cosine of the distance from the center. On the de
Sitter plane, the corresponding equation is the Klein--Gordon equation
$\Box u+2u=0$, and its solutions generate timelike minimal surfaces wherever
the support tensor is nondegenerate. We identify the support quadric and the
constant-angle condition determined by constant Cauchy data, prove that local
rigidity fails along every analytic noncharacteristic curve, and recover
rotational symmetry from constant data on a global Cauchy circle. Thus the
same support-function formalism reveals a sharp transition from elliptic
rigidity to hyperbolic flexibility when the signature changes.
\end{abstract}

\maketitle

\section{Introduction}

Overdetermined boundary value problems lie at the intersection of partial
differential equations and geometric rigidity. Their classical prototype is
Serrin's theorem: the simultaneous prescription of constant Dirichlet and
Neumann data for a solution of a semilinear elliptic equation forces a bounded
Euclidean domain to be a ball \cite{Serrin1971}. On space forms, the geometry of the ambient manifold plays an essential role. Moving-plane arguments extend
Serrin-type rigidity to hyperbolic space and to domains contained in a
hemisphere \cite{KumaresanPrajapat1998}, while geometric representations of
the equation provide a different and particularly effective approach for
distinguished curvature-dependent parameters.

For domains in the round two-sphere, Souam \cite{Souam2005} exploited such a
representation for the equation
\[
 \Delta_{\mathbb S^2}u+2u=0.
\]
The map $X_u(p)=u(p)p+\nabla u(p)$ is, away from its branch points, a minimal
immersion in $\mathbb R^3$ whose Gauss map is the inclusion. Constant
Dirichlet and Neumann data place its boundary on a round sphere and impose a
constant contact angle. The capillary rigidity of minimal disks then forces
total umbilicity and, consequently, shows that the original domain is a geodesic disk.
This argument suggests that overdetermination can be understood through the
geometry of the surface encoded by the support function.

The support-function construction itself belongs to a broader framework.
Reznikov \cite{Reznikov1992} observed that, when the Gauss map is locally
invertible, the support function linearizes the zero mean curvature equation
in dimension two. In Lorentz--Minkowski space, the Gauss map of a spacelike
maximal surface takes values in the hyperbolic plane, whereas that of a timelike minimal surface lies in the de Sitter plane. These two
targets have the same constant-curvature origin but opposite metric
signatures. Accordingly, the support equation is elliptic on $\HH$ and
hyperbolic on $\dS$. The purpose of this paper is to develop these two theories
in parallel and to determine precisely how the change of signature affects
the associated overdetermined problem.

Our first result provides the support representation in a unified form. If
$Q_\varepsilon^2\subset\Lthree$ is the quadric
$\langle p,p\rangle=\varepsilon$ and
\[
 X_u(p)=\varepsilon u(p)p+\nabla u(p),
\]
then
\[
 dX_u=\Hess u+\varepsilon u\Id.
\]
Hence, on the regular set, the equation
$\Delta u+2\varepsilon u=0$ is equivalent to the vanishing of the mean
curvature of the associated surface. In the Riemannian case, the trace-free
support tensor determines a holomorphic quadratic differential; consequently, the support map is either constant or has only isolated branch points. This yields
the following representation theorem on the hyperbolic plane.

\begin{theoremA}\label{thm:intro-H-representation}
Let $\Omega\subset\HH$ be a domain and let $u\in C^2(\Omega)$ satisfy
\[
 \Delta_{\HH}u-2u=0.
\]
Then $X_u=-up+\nabla u$ is either constant or a branched spacelike maximal
immersion with isolated branch points. On its regular set, its Gauss map is the inclusion into $\HH$. Conversely, every spacelike maximal surface whose Gauss map is locally invertible admits locally such an inverse Gauss parametrization, and its support function satisfies the preceding equation.
\end{theoremA}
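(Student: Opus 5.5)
The proof has three parts: a derivative formula for $X_u$, a holomorphic quadratic differential built from the trace-free Hessian, and an inverse Gauss parametrization for the converse.

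\emph{Step 1: the derivative formula.} Work with $\HH=\{p\in\Lthree:\langle p,p\rangle=-1,\ p_0>0\}$, so $\varepsilon=-1$, and regard $\nabla u$ as a tangent vector in $\Lthree$. The Levi-Civita connection of $\HH$ is the tangential projection of the flat connection $D$. Since the normal is $p$ and $\langle p,p\rangle=-1$, the Gauss formula reads
\[
D_v\nabla u=\nabla_v\nabla u+\langle v,\nabla u\rangle p=\Hess u(v)+du(v)\,p .
\]
Differentiating $X_u=-up+\nabla u$ gives
\[
dX_u(v)=-du(v)p-uv+\Hess u(v)+du(v)p=(\Hess u-u\Id)(v).
\]
This is the unified formula specialized to $\varepsilon=-1$. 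It shows that $dX_u$ is tangent to $\HH$. Hence $p$ is normal to the image wherever $A:=\Hess u-u\Id$ is invertible, and the image is spacelike there because $T_p\HH$ is spacelike. In particular the Gauss map is the inclusion.

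Write $A=\frac12(\tr A)\Id+A_0$ with $A_0$ trace-free. The equation $\Delta u-2u=0$ is exactly $\tr A=0$, so $A=A_0$. The shape operator of $X_u$ with respect to $N=p$ is $-dN\circ dX_u^{-1}=-A^{-1}$, and a trace-free invertible symmetric $2\times2$ matrix has trace-free inverse. So $H=0$ on the regular set. Conversely, $H=0$ forces $\tr A=0$ wherever $A$ is invertible.

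\emph{Step 2: branch points.} This is the main obstacle. Take a local isothermal coordinate $z$ on $\HH$ and set $\Phi=\Hess u(\partial_z,\partial_z)\,dz^2$, the $(2,0)$-part of the trace-free Hessian. I will show $\Phi$ is holomorphic by a Codazzi-type computation. Commuting covariant derivatives, $\nabla^3u$ differs from a symmetric tensor by curvature terms, and with $K=-1$ one gets
\[
\operatorname{div}\Hess u=d\Delta u+\mathrm{Ric}(\nabla u)=d(\Delta u)-du .
\]
For $T=\Hess u-u g$ this gives $\operatorname{div}T=d(2u)-du-du=0$. So $T$ is a divergence-free, trace-free symmetric 2-tensor. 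The classical Hopf argument then shows that its $(2,0)$-part is holomorphic: in isothermal coordinates, $\operatorname{div}T=0$ with $\tr T=0$ is the Cauchy--Riemann equation for $T_{zz}$.

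The $C^2$ regularity is handled by elliptic regularity: $u$ solves a linear elliptic equation with analytic coefficients, so $u$ is real analytic. Since $\det A=-4\lambda^{-2}|T_{zz}|^2$ for the conformal factor $\lambda$, the map $X_u$ fails to be an immersion exactly at the zeros of $\Phi$. There are two cases. If $\Phi\equiv0$ then $A\equiv0$ and $X_u$ is constant, since $\Omega$ is connected. Otherwise the zeros of $\Phi$ are isolated and $X_u$ is a branched immersion. Near a zero of order $m$, the map $X_u$ has the local form of a branch point, because the induced metric $\langle A\cdot,A\cdot\rangle$ equals $|T_{zz}|^2$ times a smooth positive factor times $|dz|^2$, up to constants. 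This is the one place I expect to need care to match the precise definition of a branched immersion.

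\emph{Step 3: the converse.} Let $X:\Sigma\to\Lthree$ be spacelike maximal with Gauss map $N:\Sigma\to\HH$ a local diffeomorphism. Locally invert $N$ and set $Y=X\circ N^{-1}$ on an open set $\Omega\subset\HH$. Define the support function $u(p)=\langle Y(p),p\rangle$. Then
\[
du(v)=\langle dY(v),p\rangle+\langle Y,v\rangle=\langle Y,v\rangle ,
\]
because $dY(v)$ is tangent to the surface and $p$ is its normal. Hence $\nabla u=Y^{\top}=Y+\langle Y,p\rangle p=Y+up$, which gives $Y=-up+\nabla u=X_u$. Applying Step 1, $dY=\Hess u-u\Id$. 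Up to sign this is the inverse of the shape operator, so $H=0$ gives $\tr(\Hess u-u\Id)=0$, that is, $\Delta u-2u=0$.
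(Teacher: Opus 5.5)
Your proposal is correct and follows essentially the same route as the paper. The shared steps are the Gauss-formula computation $dX_u=\Hess u-u\Id$, the trace-of-inverse identity giving $H=0$, holomorphicity of the $(2,0)$-part of the trace-free support tensor to isolate the branch points, and the decomposition $X=-up+\nabla u$ for the converse. You obtain holomorphicity from $\operatorname{div}(\Hess u-ug)=d(\Delta u-2u)$, whereas the paper computes $q_{\bar z}=\tfrac{e^{2\lambda}}{2}(K-\varepsilon)u_z$ directly; these are the same identity, and your remarks on analyticity of $C^2$ solutions and on conformality of the induced metric supply details the paper leaves implicit.
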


The statements of this introduction are restated in full in the body of the paper. The central elliptic result is the following hyperbolic
counterpart of Souam's rigidity theorem.

\begin{theoremB}\label{thm:intro-H-rigidity}
Let $\Omega\subset\HH$ be a bounded simply connected $C^{2,\alpha}$ domain.
Suppose that a nonzero function
$u\in C^2(\Omega)\cap C^1(\overline\Omega)$ satisfies
\[
 \Delta_{\HH}u-2u=0\quad\text{in}\quad\Omega,
 \qquad u=a,\quad \partial_\nu u=b
 \quad\text{on}\quad\partial\Omega,
\]
where $a^2\ne b^2$. Then $\Omega$ is a geodesic disk and
\[
 u=A\cosh r,
\]
where $r$ is the hyperbolic distance from its center. In particular,
$a^2-b^2=A^2>0$.
\end{theoremB}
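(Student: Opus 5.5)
The plan is to follow Souam's strategy: show that the support map $X_u=-up+\nabla u$ of Theorem A is constant, and then read off the geometry of $\Omega$ from the fact that $u$ is the restriction of a linear function. Constancy of $X_u$ will come from a Hopf-differential argument applied to the holomorphic quadratic differential built from the trace-free support tensor.

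First, regularity. Since $u$ solves $\Delta_{\HH}u-2u=0$ with constant Dirichlet data on a $C^{2,\alpha}$ domain, Schauder boundary regularity gives $u\in C^{2,\alpha}(\cl\Omega)$. This lets me evaluate $\Hess u$ on $\partial\Omega$.

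Next, the boundary condition on the support tensor. Let $T$ be the unit tangent and $\nu$ the unit conormal along $\partial\Omega$. Because $u\equiv a$ on $\partial\Omega$, we have $Tu=0$, so $\nabla u=b\nu$ there. Then
\[
 \Hess u(T,\nu)=T(\nu u)-(\nabla_T\nu)u=T(b)-(\nabla_T\nu)u=0,
\]
since $\nabla_T\nu$ is tangent to $\partial\Omega$ and $Tu=0$. So the off-diagonal component of $dX_u=\Hess u-u\Id$, and hence of its trace-free part $\mathring{S}=\Hess u-u\,g$, vanishes on $\partial\Omega$. (Here $\tr\Hess u=2u$ by the equation.)

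Geometrically this is the capillary picture. Along $\partial\Omega$,
\[
 \langle X_u,X_u\rangle=-a^2+b^2\neq0 \qquad\text{and}\qquad \langle X_u,p\rangle=u=a .
\]
So the boundary of the maximal surface lies on a nonlightlike quadric and meets it at a constant Lorentzian angle. That is where $a^2\ne b^2$ enters, but the analytic argument only needs the vanishing of $\mathring S(T,\nu)$.

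Now the Hopf argument. Since $\Omega$ is simply connected and $C^{2,\alpha}$, I take a conformal parametrization $z\colon\mathbb D\to\Omega$. By Kellogg–Warschawski it is $C^{1,\alpha}$ up to the boundary with nonvanishing derivative. By the representation result preceding Theorem A, $\Phi=\varphi\,dz^2=\mathring S(\partial_z,\partial_z)\,dz^2$ is holomorphic. On $|z|=1$ the tangent direction is $iz$, so
\[
 \mathring S(T,\nu)=0 \quad\text{translates into}\quad \operatorname{Im}\bigl(z^2\varphi(z)\bigr)=0 \ \text{on } \partial\mathbb D .
\]
The function $\operatorname{Im}(z^2\varphi)$ is harmonic on $\mathbb D$ and continuous up to the boundary, where it vanishes, so it vanishes identically. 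Hence $z^2\varphi$ is a real constant. Since it vanishes at $z=0$, we get $\varphi\equiv0$. Thus $\Hess u=u\,g$, so $dX_u=0$ and $X_u\equiv c$ is a constant vector.

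The conclusion follows from this. We have $u(p)=\langle X_u(p),p\rangle=\langle c,p\rangle$, a linear function restricted to $\HH$, and $\partial\Omega$ lies in the level set $\langle c,p\rangle=a$. Since $u\neq0$, $c\neq0$. A level set of a linear function on $\HH$ is a geodesic circle, a horocycle or an equidistant curve according as $c$ is timelike, lightlike or spacelike. Boundedness of $\Omega$, together with $\partial\Omega$ being a closed curve, forces $c$ to be timelike and $\Omega$ to be a geodesic disk centered at $c/\sqrt{-\langle c,c\rangle}$. Writing $c=-A\,o$ with $o\in\HH$ the center, we get $u=-A\langle o,p\rangle=A\cosh r$. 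Finally,
\[
 a^2-b^2=-\langle X_u,X_u\rangle=-\langle c,c\rangle=A^2>0 .
\]

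The main obstacle I expect is the boundary step. It needs enough regularity for $\Hess u$ and for the conformal chart up to $\partial\Omega$ so that $\operatorname{Im}(z^2\varphi)$ is genuinely continuous on $\cl{\mathbb D}$. If $C^{1,\alpha}$ of the Riemann map proves insufficient, I would instead apply the argument on slightly smaller level-set-adapted subdomains, or use the maximum principle for the harmonic function directly with boundary values in the weak sense.
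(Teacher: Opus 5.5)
Your proof is correct, and it reaches the paper's key step by a more direct, intrinsic route. The paper gets the boundary condition geometrically. It places $X_u(\partial\Omega)$ on the quadric $\langle x,x\rangle=b^2-a^2$ and reads $\langle X_u,p\rangle=a$ as a constant-angle condition. A Lorentzian Terquem--Joachimsthal theorem then makes the boundary a line of curvature, and the Hopf argument is run on the Hopf differential of the branched maximal disk. Only after that does it conclude that $X_u$ is constant, since a planar regular piece would have constant Gauss map, contradicting $G(X_u(p))=p$.

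You instead derive $B_u(T,\nu)=0$ in one line from the Cauchy data. You then apply the Hopf argument to the intrinsic differential $(B_u)^{2,0}$ of Section~\ref{sec:ambient}, so $\varphi\equiv0$ yields $B_u=dX_u\equiv0$ immediately. The two arguments share the same core. In inverse Gauss coordinates the second fundamental form of $X_u$ is $\pm\langle B_u\,\cdot\,,\cdot\,\rangle$, and $dX_u(T)=B_uT$ is principal exactly when $B_u(T,\nu)=0$. So ``line of curvature'' is precisely the condition you computed.

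Your version has several advantages:
\begin{itemize}
\item It needs no regularity of $X_u$ along $\partial\Omega$, which Proposition~\ref{prop:H-capillary} and the Terquem--Joachimsthal step presuppose.
\item It needs no separate treatment of branch points.
\item It makes the boundary regularity explicit. Your fallback is unnecessary: with $F\colon\mathbb D\to\Omega$ the inverse Riemann map and $w$ a Poincar\'e-disk coordinate, $\varphi=(F')^2\,\bigl(\mathring S(\partial_w,\partial_w)\circ F\bigr)$ is already continuous on $\cl{\mathbb D}$.
\item Most notably, you never use $a^2\neq b^2$. Your last step gives $a^2-b^2=-\langle c,c\rangle>0$ automatically. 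So your argument also rules out nonzero solutions in the lightlike case $a^2=b^2$, which Remark~\ref{rem:lightlike-H} leaves open.
\end{itemize}
What the paper's route buys in exchange is the capillary interpretation linking the result to Al\'ias--Pastor, Pyo--Seo and Souam.

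A trivial nit: the center is $\pm c/\sqrt{-\langle c,c\rangle}$, whichever lies on the upper sheet.
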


Indeed, the boundary data imply
\[
 \langle X_u,X_u\rangle=b^2-a^2,
 \qquad \langle X_u,p\rangle=a
 \quad\text{on}\quad\partial\Omega.
\]
Thus, when $a^2\ne b^2$, the associated maximal disk has boundary on a
nondegenerate totally umbilical quadric and meets it at constant Lorentzian
angle. The capillary rigidity results of Al\'ias--Pastor
\cite{AliasPastor1999} and Pyo--Seo \cite{PyoSeo2011}, together with the Hopf-differential argument, force the disk to be planar. In inverse Gauss
coordinates, this means that the support map is constant, from which the
geodesic-disk conclusion follows. The excluded relation $a^2=b^2$ is not a
technical artifact: it places the boundary on the light cone, where the
standard nondegenerate capillary theory is unavailable.

We next turn to the de Sitter plane
\[
 \dS=\{p\in\Lthree:\langle p,p\rangle=1\}.
\]
In this setting, the same construction becomes
\[
 X_u(p)=u(p)p+\nabla u(p),
 \qquad dX_u=\Hess u+u\Id,
\]
and zero mean curvature is equivalent to the Klein--Gordon equation
\[
 \Box_{\dS}u+2u=0.
\]
This gives an inverse Gauss-map description of timelike minimal surfaces,
complementary to the Lorentz-conformal and split-complex approaches developed,
for example, by Magid \cite{Magid1991}, Inoguchi \cite{Inoguchi1998}, and
Inoguchi--Toda \cite{InoguchiToda2004}.

The change from an elliptic to a hyperbolic equation fundamentally alters the
rigidity problem. Along a noncharacteristic curve, the prescribed values of
$u$ and its conormal derivative are Cauchy data. We obtain the following local
flexibility result.

\begin{theoremC}\label{thm:intro-dS-flexibility}
Let $\Gamma\subset\dS$ be an analytic noncharacteristic curve and let
$a,b\in\mathbb R$. Then there exists a unique analytic solution of
\[
 \Box_{\dS}u+2u=0,
 \qquad u|_\Gamma=a,
 \qquad \partial_\nu u|_\Gamma=b
\]
in a neighborhood of $\Gamma$. Thus, constant Cauchy data impose no
local restriction on the geometry of $\Gamma$.
\end{theoremC}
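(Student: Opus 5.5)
This is a direct application of the Cauchy--Kovalevskaya theorem. Everything in it is analytic: the de Sitter metric on $\dS$, the operator $\Box_{\dS}+2$, the curve $\Gamma$, and the constant data. The approach is to straighten $\Gamma$ in analytic coordinates, check that the Cauchy problem there is in Kovalevskaya normal form, and then read off existence and uniqueness.

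\textbf{Coordinates.} Fix a point of $\Gamma$. Since $\Gamma$ is noncharacteristic, its tangent $\dot\gamma$ is not null, so $\langle\dot\gamma,\dot\gamma\rangle\neq 0$. Its unit normal $\nu$ in $T\dS$ is therefore well defined and analytic, and it is nonnull. We build Fermi-type coordinates $(s,t)\mapsto \exp_{\gamma(s)}(t\nu(s))$, where $s$ is an arclength parameter on $\Gamma$. The geodesics of $\dS$ are plane sections through the origin, so this map is explicitly analytic:
\[
\exp_q(t\nu)=\cosh t\, q+\sinh t\,\nu \quad\text{or}\quad \cos t\, q+\sin t\,\nu ,
\]
according to the causal character of $\nu$. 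Its differential at $t=0$ is nondegenerate, so it is an analytic diffeomorphism near $\Gamma$. In these coordinates the metric takes the form $g=\pm dt^2+G(s,t)\,ds^2$ with $G$ analytic and $G(s,0)\neq0$. The equation then reads
\[
\pm\partial_t^2u=F(s,t,u,\partial_su,\partial_tu,\partial_s^2u),
\]
with $F$ analytic and linear in its arguments. The data become $u(s,0)=a$ and $\partial_tu(s,0)=b$, since $\partial_t$ is exactly $\nu$ along $\Gamma$.

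\textbf{Local and global existence.} On each coordinate patch, Cauchy--Kovalevskaya gives a unique analytic solution near $t=0$. Solutions on overlapping patches agree, by the uniqueness part of Cauchy--Kovalevskaya or by Holmgren's theorem. They therefore glue to an analytic solution on a neighbourhood of the whole of $\Gamma$. If $\Gamma$ is noncompact, the neighbourhood is simply a union of such patches.

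\textbf{Uniqueness and the conclusion.} Uniqueness among analytic solutions follows from the same local uniqueness together with analytic continuation along $\Gamma$. Since the data $a,b$ were arbitrary constants and $\Gamma$ was an arbitrary analytic noncharacteristic curve, constant Cauchy data force no restriction on the shape of $\Gamma$.

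\textbf{Main obstacle.} The only point needing care is the gluing along $\Gamma$: each local existence domain may shrink, so the neighbourhood must be defined as a union of local ones. Verifying the noncharacteristic normal form is routine, because $\partial_t$ is nonnull.
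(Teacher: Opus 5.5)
Your proposal is correct and follows essentially the same route as the paper: Gaussian (Fermi) coordinates along $\Gamma$ with $g_{st}=0$ and $g_{tt}=\pm1$, so that the Klein--Gordon equation is in Kovalevskaya normal form with data $u(s,0)=a$, $u_t(s,0)=b$, followed by Cauchy--Kovalevskaya and gluing by uniqueness. Your explicit formula for the normal exponential map via de Sitter geodesics makes the analyticity of the coordinates clearer than the paper's citation of O'Neill, and you flag the overlap issue in the gluing step, which the paper handles in a single line.
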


Moreover, if $\langle\nu,\nu\rangle=\delta\in\{-1,1\}$, then the associated
boundary lies on the quadric
\[
 \mathcal Q_\rho=\{x\in\Lthree:\langle x,x\rangle=\rho\},
 \qquad \rho=a^2+\delta b^2,
\]
and, whenever $\rho\ne0$ and the support tensor is nondegenerate, the timelike
minimal surface meets $\mathcal Q_\rho$ at constant Lorentzian angle. Hence
the geometric capillary condition persists, but it no longer selects the
boundary curve.

Although local rigidity fails, symmetry is recovered when the data are
prescribed globally on an invariant Cauchy curve.

\begin{theoremD}\label{thm:intro-dS-rotational}
Let $u$ solve $\Box_{\dS}u+2u=0$ on a globally hyperbolic strip containing the
Cauchy circle $\Sigma_{t_0}=\{t=t_0\}$. If $u$ and $u_t$ are constant on
$\Sigma_{t_0}$, then $u$ is rotationally invariant. More precisely,
$u(t,\theta)=f(t)$, where
\[
 f''+\tanh t\,f'-2f=0.
\]
\end{theoremD}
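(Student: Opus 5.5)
We work in the global coordinates in which $\dS$ is parametrized by
\[
p(t,\theta)=(\sinh t,\cosh t\cos\theta,\cosh t\sin\theta),
\]
so that the induced metric is $g=-dt^2+\cosh^2 t\,d\theta^2$. For this metric $\sqrt{|g|}=\cosh t$, and the d'Alembertian of $\dS$ is
\[
\Box_{\dS}u=-u_{tt}-\tanh t\,u_t+\cosh^{-2}t\,u_{\theta\theta}.
\]
(The overall sign convention has to be matched with the one used in the body of the paper, which is fixed by the requirement that $dX_u=\Hess u+u\Id$ and $\tr(\Hess u+u\Id)=\Box u+2u$.) With this convention the equation $\Box u+2u=0$ becomes
\[
u_{tt}+\tanh t\,u_t-\cosh^{-2}t\,u_{\theta\theta}-2u=0 .
\]
The circles $\Sigma_t=\{t=\text{const}\}$ are spacelike Cauchy surfaces of any strip $\{t_1<t<t_2\}$. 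The conormal of $\Sigma_{t_0}$ is $\partial_t$, so constant Cauchy data on $\Sigma_{t_0}$ means exactly that $u(t_0,\cdot)\equiv a$ and $u_t(t_0,\cdot)\equiv b$.

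The strategy is to use uniqueness for the Cauchy problem together with the rotational symmetry of the data.

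First, let $f$ be the solution of the linear ODE $f''+\tanh t\,f'-2f=0$ with $f(t_0)=a$ and $f'(t_0)=b$. This $f$ exists on the whole strip, because the coefficients are smooth for every $t$. The function $\tilde u(t,\theta)=f(t)$ satisfies the PDE, since its $\theta$-derivatives vanish. It also has the same Cauchy data as $u$ on $\Sigma_{t_0}$.

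Second, set $w=u-\tilde u$. Then $w$ solves the same linear hyperbolic equation and has zero Cauchy data on $\Sigma_{t_0}$. We show $w\equiv0$ by an energy estimate. Define
\[
E(t)=\tfrac12\int_0^{2\pi}\bigl(w_t^2+\cosh^{-2}t\,w_\theta^2+w^2\bigr)\cosh t\,d\theta .
\]
Differentiating $E$ in $t$, using the equation, and integrating by parts in $\theta$ (there are no boundary terms because $\theta$ is periodic) gives $|E'(t)|\le C\,E(t)$. Here $C$ is uniform on compact subintervals of the strip: the zero-order term $2w$ and the lower-order terms coming from $\tanh t$ and from differentiating the weights are all controlled by $E$. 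Since $E(t_0)=0$, Grönwall's inequality gives $E\equiv0$ both forward and backward in $t$. Hence $w\equiv0$ on the whole strip.

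Alternatively, one could invoke the standard uniqueness of the Cauchy problem on globally hyperbolic spacetimes, which is presumably already available in the paper. Another route is to note that rotations $R_\alpha$ about the timelike axis are isometries of $\dS$ preserving $\Sigma_{t_0}$ and the data. Then $u\circ R_\alpha$ solves the same Cauchy problem, so uniqueness gives $u\circ R_\alpha=u$ for all $\alpha$, that is, $u_\theta=0$. The ODE then follows by substituting into the PDE.

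The only real obstacle is the sign bookkeeping. We have to check that the radial reduction indeed produces $f''+\tanh t\,f'-2f=0$, with $-2f$ rather than $+2f$; this sign is governed by the paper's convention for $\Box$. We must also make sure the energy argument needs only the regularity assumed, namely $C^2$. If only $C^2$ is assumed, we justify the integration by parts directly, which suffices for $C^2$ functions on the compact circle.
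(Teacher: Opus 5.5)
Your proof is correct, but your main argument follows a different route from the paper's. Your ``alternatively'' paragraph is exactly the paper's proof: rotations $R_\phi$ preserve $\Sigma_{t_0}$ and the data, so the cited uniqueness theorem of B\"ar--Ginoux--Pf\"affle for normally hyperbolic operators on globally hyperbolic manifolds forces $u\circ R_\phi=u$.

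Your primary argument is different. You build the candidate radial solution $f$ directly from the ODE with $f(t_0)=a$, $f'(t_0)=b$, and you prove uniqueness by hand. The energy estimate does close. Using $w_{tt}=-\tanh t\,w_t+\cosh^{-2}t\,w_{\theta\theta}+2w$ and periodicity in $\theta$, the terms $\pm2\cosh^{-1}t\,w_tw_{\theta\theta}$ cancel, and
\[
E'(t)=\tfrac12\int_0^{2\pi}\bigl(-\sinh t\,w_t^2-\sinh t\cosh^{-2}t\,w_\theta^2+\sinh t\,w^2+6\cosh t\,w\,w_t\bigr)\,d\theta .
\]
Hence $|E'|\le 4E$, with a constant that does not depend on the strip. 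Including $w^2$ in $E$ is what controls the zero-order term $2w$.

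\textbf{What your route buys.} It is self-contained and needs only $C^2$ regularity. It also identifies $f$ explicitly through its initial data, so existence and uniqueness of the radial solution come at once.

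\textbf{What the paper's route buys.} It is brief and coordinate-free. The same two-line argument applies to any group of isometries preserving the Cauchy surface and the data, with no computation. Your estimate, by contrast, relies on the explicit warped-product form $-dt^2+\cosh^2t\,d\theta^2$ and on integrating over full circles $\{t\}\times S^1$.

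\textbf{One cosmetic point.} With the paper's signature $(+,+,-)$ the timelike coordinate is the last one. The parametrization should therefore read $p(t,\theta)=(\cosh t\cos\theta,\cosh t\sin\theta,\sinh t)$. Since you only use the induced metric, nothing downstream is affected.
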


The proof follows directly from uniqueness for the global Cauchy problem:
rotations preserve both the equation and the prescribed data. Equivalently,
the Fourier decomposition shows that all nonzero angular modes vanish. Taken
together, Theorems~A--D exhibit the main dichotomy of the paper: elliptic
overdetermination on $\HH$ produces domain rigidity, whereas hyperbolic Cauchy
data on $\dS$ allow arbitrary local boundary geometry, with symmetry returning
only under a global invariant prescription.

\textbf{The paper is organized as follows.} In Section~\ref{sec:ambient}, we derive the unified support-function formula
and record the holomorphicity of the support tensor in the Riemannian setting.
In Section~\ref{sec:hyperbolic}, we develop the maximal-surface representation
and prove the nonlightlike rigidity theorem on $\HH$. Finally, in
Section~\ref{sec:desitter}, we study the timelike minimal representation,
boundary quadrics, local flexibility, and global rotational rigidity on $\dS$.

\section{A unified support-function calculation}\label{sec:ambient}

Let $\Lthree=(\mathbb R^3,\langle\,,\,\rangle)$ have signature $(+,+,-)$ and
define
\[
 Q_\varepsilon^2=\{p\in\Lthree:\langle p,p\rangle=\varepsilon\},
 \quad \varepsilon\in\{-1,1\}.
\]
We use the upper sheet for $Q_{-1}^2=\HH$ and write $Q_1^2=\dS$.  The induced
metric $g_\varepsilon$ is Riemannian for $\varepsilon=-1$ and Lorentzian for
$\varepsilon=1$.  Its Laplace--Beltrami operator is denoted by $\Delta$ in the
Riemannian case and by $\Box$ in the Lorentzian case.  Both are defined as the
metric trace of the Hessian.  With this convention,
\[
 \Delta_{\HH}\cosh r=2\cosh r,
\]
where \(r\) denotes the hyperbolic distance from a fixed point.

The ambient connection $D$ of $\Lthree$ and the Levi--Civita connection
$\nabla$ of $Q_\varepsilon^2$ are related by the Gauss formula
\begin{equation}\label{eq:gauss-quadric}
	D_YZ=\nabla_YZ-\varepsilon\langle Y,Z\rangle p.
\end{equation}

\begin{proposition}\label{prop:support-unified}
Let $u\in C^2(\Omega)$, where $\Omega\subset Q_\varepsilon^2$ is a domain, and define
\begin{equation}\label{eq:support-map}
 X_u(p)=\varepsilon u(p)p+\nabla u(p).
\end{equation}
Then
\begin{equation*}\label{eq:support-differential}
 dX_u=\Hess u+\varepsilon u\Id.
\end{equation*}
In particular, $dX_u(T_pQ_\varepsilon^2)\subset T_pQ_\varepsilon^2$ and
$\langle X_u,p\rangle=u$.
\end{proposition}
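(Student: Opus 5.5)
The plan is to differentiate $X_u$ directly in the ambient space $\Lthree$, viewing $X_u$ as an $\Lthree$-valued map on $\Omega$. We use the flat connection $D$ and the Gauss formula \eqref{eq:gauss-quadric} to split every derivative into a tangential part and a part along the position vector. Fix $p\in\Omega$ and a tangent vector $Y\in T_pQ_\varepsilon^2$. Since the position vector field $p$ is the identity map, $D_Yp=Y$. By the Leibniz rule,
\[
 D_Y\bigl(\varepsilon u\,p\bigr)=\varepsilon\,(Yu)\,p+\varepsilon u\,Y
 =\varepsilon\langle\nabla u,Y\rangle p+\varepsilon uY .
\]

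For the gradient term, apply \eqref{eq:gauss-quadric} with $Z=\nabla u$:
\[
 D_Y\nabla u=\nabla_Y\nabla u-\varepsilon\langle Y,\nabla u\rangle p=\Hess u(Y)-\varepsilon\langle Y,\nabla u\rangle p ,
\]
where $\Hess u$ is regarded as the $(1,1)$-tensor $Y\mapsto\nabla_Y\nabla u$. Adding the two contributions, the normal components $\pm\varepsilon\langle\nabla u,Y\rangle p$ cancel, and we obtain
\[
 dX_u(Y)=\Hess u(Y)+\varepsilon u\,Y .
\]
This is the claimed formula. Its right-hand side is tangent to $Q_\varepsilon^2$ at $p$, which gives the inclusion $dX_u(T_pQ_\varepsilon^2)\subset T_pQ_\varepsilon^2$.

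For the last identity, $\langle p,p\rangle=\varepsilon$ and $\varepsilon^2=1$. Moreover, $\nabla u(p)\in T_pQ_\varepsilon^2=p^\perp$; this uses nondegeneracy of $\langle\,,\,\rangle$ restricted to $T_pQ_\varepsilon^2$, which holds because $\langle p,p\rangle\neq0$. Hence
\[
 \langle X_u,p\rangle=\varepsilon u\langle p,p\rangle=\varepsilon^2u=u .
\]

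The only delicate point is the sign bookkeeping in the Gauss formula, that is, checking that the normal term of $D_Y\nabla u$ exactly cancels the derivative of the coefficient $\varepsilon u$. I would verify this as follows. Differentiate $\langle Z,p\rangle=0$ to get $\langle D_YZ,p\rangle=-\langle Z,Y\rangle$. Since $\langle p,p\rangle=\varepsilon$, the normal component of $D_YZ$ is therefore $\varepsilon\langle D_YZ,p\rangle\,p=-\varepsilon\langle Y,Z\rangle p$, which agrees with \eqref{eq:gauss-quadric} for both signatures.

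Regularity is not an issue. With $u\in C^2$, the map $X_u$ is $C^1$ and all the derivatives above exist.
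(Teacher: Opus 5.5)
Your proposal is correct and follows essentially the same route as the paper. You apply the Gauss formula to $\nabla u$, observe that its normal term cancels against $\varepsilon\,Y(u)\,p$ coming from differentiating $\varepsilon u\,p$, and then use $\langle p,p\rangle=\varepsilon$ together with $\nabla u\perp p$ for the support identity; your explicit check of the sign in the Gauss formula is a welcome addition.
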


\begin{proof}
For $Y\in T_pQ_\varepsilon^2$, formula \eqref{eq:gauss-quadric} gives
\[
 D_Y\nabla u=\nabla_Y\nabla u-\varepsilon Y(u)p.
\]
Differentiating \eqref{eq:support-map}, we find that the two normal terms cancel, so that
\[
 D_YX_u=\Hess u(Y)+\varepsilon uY.
\]
The last assertion follows from $\langle p,p\rangle=\varepsilon$ and
$\langle\nabla u,p\rangle=0$.
\end{proof}

We set
\[
 B_u=\Hess u+\varepsilon u\Id.
\]
Where $B_u$ is invertible, $X_u$ is an immersion and its Gauss map is $p$ up
to orientation.  With the convention $A=dG$, its shape operator is
$A=B_u^{-1}$.

\begin{lemma}\label{lem:trace-inverse}
For every invertible endomorphism $B$ of a two-dimensional vector space,
\[
 \tr(B^{-1})=\frac{\tr B}{\det B}.
\]
Hence, at every regular point of $X_u$,
\[
 H_{X_u}=0\quad\text{if and only if}\quad
 \tr B_u=0.
\]
\end{lemma}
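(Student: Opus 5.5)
The first assertion is pure linear algebra in dimension two, and I will prove it with the Cayley--Hamilton theorem. For an endomorphism $B$ of a two-dimensional space,
\[
 B^2-(\tr B)B+(\det B)\Id=0 .
\]
When $\det B\neq0$, multiplying by $B^{-1}$ gives
\[
 B^{-1}=\frac{(\tr B)\Id-B}{\det B}.
\]
Taking traces and using $\tr\Id=2$ then yields $\tr(B^{-1})=(2\tr B-\tr B)/\det B=\tr B/\det B$. I will note that this identity is purely algebraic and holds for any invertible $B$. In particular it needs no symmetry or diagonalizability, which matters because in the Lorentzian case $\varepsilon=1$ the operator $B_u$ is only $g_\varepsilon$-self-adjoint and need not be diagonalizable.

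For the geometric consequence I will use the discussion preceding the lemma. At a regular point, $B_u$ is invertible, $X_u$ is an immersion with Gauss map $p$ (up to orientation), and its shape operator is $A=B_u^{-1}$. The mean curvature is a nonzero constant multiple of $\tr A$; the factor $\pm\tfrac12$ depends only on conventions and orientation. Hence
\[
 H_{X_u}=0\iff \tr(B_u^{-1})=0\iff \frac{\tr B_u}{\det B_u}=0\iff \tr B_u=0,
\]
where the last step uses $\det B_u\neq0$.

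The only point needing care is that $A=B_u^{-1}$ is a genuine identity of endomorphisms of $T_pQ^2_\varepsilon$. This holds because $dX_u$ maps $T_pQ^2_\varepsilon$ into $T_pQ^2_\varepsilon$ by Proposition~\ref{prop:support-unified}, so the tangent plane of $X_u$ at $p$ is $T_pQ^2_\varepsilon$ itself. The differential of the Gauss map $G=p$ is the identity, so $dG=A\,dX_u$ gives $\Id=A\,B_u$. The mean curvature is the trace of $A$ computed with respect to the induced metric, and since $A$ is an endomorphism, its trace is metric-independent. So no signature issue arises, and I do not expect any real obstacle here.

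Finally, I will record that $\tr B_u=\Delta u+2\varepsilon u$, since $\tr\Id=2$. This gives the equivalence with $\Delta u+2\varepsilon u=0$ announced in the introduction.
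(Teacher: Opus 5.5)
Your proposal is correct and follows essentially the paper's route. The paper states $\tr(B^{-1})=\tr B/\det B$ as an elementary, signature-independent identity, which your Cayley--Hamilton derivation supplies without needing diagonalizability. It obtains the geometric equivalence just as you do, from $G\circ X_u=\Id$ giving $A=B_u^{-1}$ and $\det B_u\neq0$, as in the proofs of Theorems~\ref{thm:H-representation} and~\ref{thm:dS-representation}.
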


This elementary identity is independent of the signature.  It yields the
unified equation
\begin{equation*}\label{eq:unified-pde}
 \operatorname{tr}_{g_\varepsilon}\Hess u+2\varepsilon u=0.
\end{equation*}
For $\varepsilon=-1$, this is $\Delta u-2u=0$; for $\varepsilon=1$, it is
$\Box u+2u=0$. Moreover, as we prove below, every solution of \(\Delta u-2u=0\) gives rise to a holomorphic quadratic differential associated with \(B_u\), with respect to the conformal structure of \(\mathbb{H}^2\).
 
\begin{lemma}
Let $(\Omega,g)$ be a domain in an oriented Riemannian surface of constant Gaussian
curvature $\varepsilon$. Suppose that
\[
\Delta u+2\varepsilon u=0
\]
and set
\(
B_u=\Hess u+\varepsilon u\,g.
\)
Then the $(2,0)$-part of $B_u$ is a holomorphic quadratic differential.
\end{lemma}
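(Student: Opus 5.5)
We work in a local isothermal coordinate $z=x+iy$, with $g=\lambda^2|dz|^2$, and write $\Phi=\phi\,dz^2$ for the $(2,0)$-part of $B_u$. Since $g$ has no $(2,0)$-part, we have $\phi=B_u(\partial_z,\partial_z)=\Hess u(\partial_z,\partial_z)=u_{;zz}$. Equivalently, $\phi=u_{zz}-\Gamma^z_{zz}u_z$ with $\Gamma^z_{zz}=2\partial_z\log\lambda$. Holomorphicity of $\Phi$ is therefore the statement $\partial_{\bar z}\phi=0$. The cleanest route is tensorial: $\Phi$ is holomorphic exactly when the trace-free part $\mathring B$ of $B_u$ is a Codazzi tensor, i.e. divergence free. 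So the plan is to compute $\operatorname{div}\mathring B$.

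First, the equation $\Delta u+2\varepsilon u=0$ gives $\tr B_u=\Delta u+2\varepsilon u=0$. Hence $B_u$ is itself trace free, and $\mathring B=B_u$.

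Next, we compute the divergence of $B_u$. In index notation, $(\operatorname{div}B_u)_j=u_{;ij}{}^{i}+\varepsilon u_{;j}$. The Ricci identity on a surface of constant curvature $\varepsilon$ gives $u_{;ij}{}^{i}=u_{;i}{}^{i}{}_{j}+\operatorname{Ric}(\nabla u)_j=(\Delta u)_{;j}+\varepsilon u_{;j}$. Therefore
\[
\operatorname{div}B_u=d(\Delta u)+2\varepsilon\,du=d(\Delta u+2\varepsilon u)=0.
\]

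Finally, we translate into the complex coordinate. For a symmetric trace-free 2-tensor $T$ on a surface, $T=\phi\,dz^2+\bar\phi\,d\bar z^2$, and a direct computation gives $(\operatorname{div}T)(\partial_{\bar z})=\lambda^{-2}\cdot 2\,\partial_z\bar\phi$, up to a constant factor. The Christoffel terms cancel precisely because $T$ is trace free; this is the standard fact behind the Hopf differential. So $\operatorname{div}B_u=0$ is equivalent to $\partial_{\bar z}\phi=0$, and $\Phi$ is holomorphic. Since being trace free and divergence free are coordinate-invariant conditions, $\Phi$ is a globally defined holomorphic quadratic differential on $\Omega$.

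The main obstacle is the bookkeeping in this last step. The point to get right is that the Christoffel symbols $\Gamma^z_{z\bar z}=\Gamma^{\bar z}_{z\bar z}=0$ vanish in conformal coordinates, so that $\nabla_{\bar z}$ acting on $\phi\,dz^2$ reduces to $\partial_{\bar z}\phi\,dz^2$. As an independent check, one can also verify $\partial_{\bar z}(u_{zz}-2(\log\lambda)_zu_z)=0$ directly. For this, use $u_{z\bar z}=\tfrac{\lambda^2}{4}\Delta u=-\tfrac{\varepsilon\lambda^2}{2}u$ together with the curvature relation $(\log\lambda)_{z\bar z}=-\tfrac{\varepsilon\lambda^2}{4}$. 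The terms then cancel:
\[
(-\tfrac{\varepsilon\lambda^2}{2}u)_z-2(\log\lambda)_{z\bar z}u_z-2(\log\lambda)_z(-\tfrac{\varepsilon\lambda^2}{2}u)
=-\varepsilon\lambda\lambda_zu-\tfrac{\varepsilon\lambda^2}{2}u_z+\tfrac{\varepsilon\lambda^2}{2}u_z+\varepsilon\lambda\lambda_z u=0.
\]
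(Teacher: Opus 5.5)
Your proposal is correct. Its closing ``independent check'' is exactly the paper's proof. In a chart $g=e^{2\lambda}|dz|^2$ (the paper's $\lambda$ is your $\log\lambda$), the authors differentiate $q=u_{zz}-2\lambda_z u_z$, insert $u_{z\bar z}=-\tfrac{\varepsilon}{2}e^{2\lambda}u$ and $K=-4e^{-2\lambda}\lambda_{z\bar z}$, and obtain $q_{\bar z}=\tfrac{e^{2\lambda}}{2}(K-\varepsilon)u_z$, which vanishes because $K=\varepsilon$.

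Your main argument takes a genuinely different, tensorial route, and it is complete.

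\emph{What your route does.} The Ricci identity gives $\operatorname{div}B_u=d(\tr B_u)+(K-\varepsilon)\,du$ on any surface. Hence, when $K\equiv\varepsilon$, you have $\operatorname{div}B_u=d(\tr B_u)$ for every $u$, and the equation is used only to make $B_u$ trace-free. The dictionary $(\operatorname{div}T)(\partial_z)=2\lambda^{-2}\partial_{\bar z}\phi$ for trace-free $T$ then turns divergence-freeness into $\partial_{\bar z}\phi=0$. Your bookkeeping here is right: the Christoffel terms drop out because $T_{z\bar z}=0$ and $\Gamma^{k}_{z\bar z}=0$.

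\emph{What it buys.} The argument is chart-free, and it identifies $B_u$ as a trace-free Codazzi tensor. This is the same structure as the second fundamental form of a minimal or maximal surface, so the lemma is seen to be the usual Hopf-differential mechanism. It is consistent with $B_u=dX_u$, whose integrability is precisely the Codazzi property.

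\emph{What the paper's computation buys.} It is shorter and fully self-contained, avoiding both the Ricci identity and the divergence-to-$\bar\partial$ translation. Its identity $q_{\bar z}=\tfrac{e^{2\lambda}}{2}(K-\varepsilon)u_z$ is exactly your identity $\operatorname{div}B_u=(K-\varepsilon)\,du$ (for trace-free $B_u$) evaluated on $\partial_z$. So the two proofs carry the same information and isolate the same curvature obstruction.
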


\begin{proof}
If $z$ is a conformal coordinate such that
\(
g=e^{2\lambda}|dz|^2,
\)
then
\(
(B_u)^{2,0}
=
\left(u_{zz}-2\lambda_z u_z\right)dz^2.
\)
Set
\(
q=u_{zz}-2\lambda_z u_z.
\)
Since
\(
\Delta u=4e^{-2\lambda}u_{z\bar z},
\)
the equation $\Delta u+2\varepsilon u=0$ gives
\[
u_{z\bar z}
=-\frac{\varepsilon}{2}e^{2\lambda}u.
\]
Hence,
\[
\begin{aligned}
q_{\bar z}
&=u_{zz\bar z}
  -2\lambda_{z\bar z}u_z
  -2\lambda_z u_{z\bar z}=-\left(
  \frac{\varepsilon}{2}e^{2\lambda}
  +2\lambda_{z\bar z}
  \right)u_z.
\end{aligned}
\]
The curvature identity
\(
K=-4e^{-2\lambda}\lambda_{z\bar z}
\)
therefore yields
\[
q_{\bar z}
=\frac{e^{2\lambda}}2(K-\varepsilon)u_z.
\]
Since $K=\varepsilon$, we conclude that $q_{\bar z}=0$.
\end{proof}

A direct consequence is the following result.

\begin{corollary}
Under the assumptions of the previous lemma, either $B_u\equiv0$ or the
zeros of $B_u$ are isolated.
\end{corollary}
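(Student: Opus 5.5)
The plan is to reduce the statement to the identity principle for holomorphic functions, using the previous lemma together with the trace equation.

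\textbf{Step 1: $B_u$ is determined by its $(2,0)$-part.} The equation $\Delta u+2\varepsilon u=0$ says exactly that $\tr_g B_u=0$. In a conformal coordinate $z$ with $g=e^{2\lambda}|dz|^2$, a symmetric real $2$-tensor decomposes as
\[
B=B^{2,0}+B^{1,1}+B^{0,2}.
\]
Here $B^{0,2}=\overline{B^{2,0}}$, and $B^{1,1}$ is a multiple of $g$ with coefficient proportional to $\tr_g B$. Since the trace of $B_u$ vanishes, we get
\[
B_u=q\,dz^2+\bar q\,d\bar z^2,\qquad q=u_{zz}-2\lambda_z u_z .
\]
So at each point, $B_u=0$ if and only if $q=0$. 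A short coordinate check confirms this: the components of $B_u$ in $x,y$ are $2\operatorname{Re}q$, $-2\operatorname{Im}q$, and the traceless diagonal part.

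\textbf{Step 2: apply the identity principle.} By the previous lemma, $q$ is holomorphic in every conformal chart. Under a change of conformal coordinate, $q$ is multiplied by the nonvanishing factor $(dw/dz)^2$, so the zero set of $B_u$ is well defined and, locally, is the zero set of a holomorphic function. Consider the set $Z$ of points near which $B_u$ vanishes identically. It is open by definition. It is also closed in $\Omega$: at a limit point of $Z$, the function $q$ in a local chart vanishes on an open set accumulating there, so by the identity theorem it vanishes on the whole connected chart. Since $\Omega$ is a domain, hence connected, either $Z=\Omega$, in which case $B_u\equiv0$, or $Z=\emptyset$. In the second case, $q$ is not identically zero in any chart, so its zeros are isolated in each chart, and therefore the zeros of $B_u$ are isolated in $\Omega$.

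\textbf{Main obstacle: regularity.} The only delicate point is that $u\in C^2$ only gives $q\in C^0$, so the computation of $q_{\bar z}$ in the lemma requires third derivatives. I would handle this by elliptic regularity. The equation $\Delta u+2\varepsilon u=0$ has smooth (indeed real-analytic) coefficients, so $u$ is smooth, even analytic. The lemma's computation is then fully justified, and $q$ is genuinely holomorphic.
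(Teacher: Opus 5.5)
Your proposal is correct and follows the paper's own argument. In both, trace-freeness means $B_u$ vanishes exactly where its holomorphic $(2,0)$-part vanishes, and the identity principle then gives the dichotomy. Your open--closed argument on the connected domain and your elliptic-regularity remark (which justifies the third derivatives used in the lemma) only make explicit what the paper's two-line proof leaves implicit.
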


\begin{proof}
Since $B_u$ is trace-free, it vanishes at a point if and only if its
$(2,0)$-part vanishes there. The latter is a holomorphic quadratic
differential, whose zeros are therefore isolated unless it vanishes identically.
\end{proof}

\section{The hyperbolic plane}\label{sec:hyperbolic}

\subsection{Maximal surfaces and the equation \texorpdfstring{$\Delta u-2u=0$}{Delta u - 2u = 0}}

For a domain $\Omega\subset\HH$, define
\begin{equation}\label{eq:X-hyperbolic}
 X_u(p)=-u(p)p+\nabla u(p).
\end{equation}

We next provide a geometric interpretation of the solutions of a linear elliptic PDE. The result reads as follows.

\begin{theorem}
\label{thm:H-representation}
Let $\Omega\subset\HH$ be a domain and let $u\in C^2(\Omega)$ satisfy
\begin{equation}\label{eq:H-equation}
 \Delta_{\HH}u-2u=0.
\end{equation}
Then the map $X_u$ defined by~\eqref{eq:X-hyperbolic} is either constant or
a branched spacelike maximal immersion whose branch points are isolated.
More precisely, its regular set is
\[
 \Omega_{\rm reg}
 =
 \bigl\{p\in\Omega:
 \det(\Hess u-u\Id)\neq0
 \bigr\},
\]
and $\Omega\setminus\Omega_{\rm reg}$ is discrete. On
$\Omega_{\rm reg}$, the Gauss map of $X_u$ is the inclusion
\(
 \Omega_{\rm reg}\hookrightarrow\HH.
\)

Conversely, let $X$ be a spacelike maximal surface in $\Lthree$ whose
Gauss map is locally invertible away from its branch points. Then its inverse
Gauss parametrization is locally of the form~\eqref{eq:X-hyperbolic}, and
the corresponding support function satisfies~\eqref{eq:H-equation}.
\end{theorem}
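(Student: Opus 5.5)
We specialize Proposition~\ref{prop:support-unified} to $\varepsilon=-1$. This gives $dX_u=B_u=\Hess u-u\Id$, with $B_u$ mapping $T_p\HH$ into itself, and $\langle X_u,p\rangle=u$. The plan has four steps: identify the image of $dX_u$, locate the degenerate set, check that the immersion is maximal, and then prove the converse.

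\emph{Spacelike image and Gauss map.} Since $T_p\HH=p^\perp$ is spacelike, wherever $B_u$ is invertible the image $dX_u(T_p\HH)=T_p\HH$ is a spacelike plane with timelike unit normal $p$. So on $\Omega_{\rm reg}=\{\det B_u\neq0\}$ the map $X_u$ is a spacelike immersion, and its Gauss map is the inclusion $p\mapsto p$, up to the choice of orientation.

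\emph{Maximality and the branch set.} The equation \eqref{eq:H-equation} says exactly that $\tr B_u=0$. By Lemma~\ref{lem:trace-inverse}, the shape operator $A=B_u^{-1}$ is then trace-free on $\Omega_{\rm reg}$, so $H=0$ there. Next we identify the degenerate set. Because $B_u$ is self-adjoint with respect to a Riemannian metric and trace-free, its eigenvalues are $\pm\mu$, so $\det B_u=-\mu^2\le 0$. Hence $\det B_u=0$ exactly where $B_u=0$. The corollary following the holomorphicity lemma (applied with $\varepsilon=-1$, $K=-1$) then gives a dichotomy: either $B_u\equiv0$, and then $dX_u\equiv0$ and $X_u$ is constant on the connected set $\Omega$; or the zeros of $B_u$ are isolated, so $\Omega\setminus\Omega_{\rm reg}$ is discrete.

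To call these isolated points branch points, we use the conformal description. Locally $B_u^{2,0}=q\,dz^2$ with $q$ holomorphic, and the induced metric is $\langle B_u\cdot,B_u\cdot\rangle=g(B_u^2\cdot,\cdot)=\mu^2 g=|q|^2e^{-2\lambda}\cdot(\text{const})\,|dz|^2$, up to normalization. Since $B_u$ is trace-free and symmetric, $B_u^2=\mu^2\Id$, so this metric is conformal to $g$ and vanishes like $|q|^2$. Thus near a zero of $q$ of order $k$, $X_u$ is a conformal harmonic map whose conformal factor vanishes to finite order, which is the standard notion of a branch point. This step, checking that the degenerate points are genuine branch points, is the main obstacle. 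I expect it to follow from the explicit formula $X_z=dX_u(\partial_z)$, which is expressible through $q$ and $\bar q$, so that $X_z$ vanishes to finite order $k$. Harmonicity follows from $H=0$ on the dense regular set together with continuity.

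\emph{Converse.} Let $X$ be spacelike maximal with future unit normal $N$ that is locally a diffeomorphism onto an open set of $\HH$. Write $X\circ N^{-1}$ as a map $\Omega\to\Lthree$ and define $u(p)=\langle X(p),p\rangle$. Since $dX(Y)\perp p$ for tangent $Y$, we get $du(Y)=\langle X,Y\rangle$. Hence the tangential part of $X$ is $\nabla u$. The normal part is $-\langle X,p\rangle p=-up$, because $\langle p,p\rangle=-1$. This gives $X=-up+\nabla u=X_u$. By Proposition~\ref{prop:support-unified}, $dX=B_u$ is the inverse of the shape operator, and $H=0$ together with Lemma~\ref{lem:trace-inverse} gives $\tr B_u=0$, i.e.\ \eqref{eq:H-equation}. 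A posteriori $u$ is smooth, since $X$ and $N^{-1}$ are.
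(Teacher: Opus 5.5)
Your proposal is correct and follows the paper's route: Proposition~\ref{prop:support-unified} and Lemma~\ref{lem:trace-inverse} give maximality, and the normal/tangential decomposition of $X$ gives the converse. You also supply what the paper's proof leaves implicit, namely the observation $\det B_u=-\mu^2$, which gives $\det B_u=0$ iff $B_u=0$ and is what lets the Section~\ref{sec:ambient} corollary yield discreteness of $\Omega\setminus\Omega_{\rm reg}$; the branch-point step you anticipated also goes through, since $dX_u(\partial_z)=2e^{-2\lambda}q\,p_{\bar z}$, so $X_z$ vanishes to exactly the order of $q$.
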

\begin{proof}
By Proposition~\ref{prop:support-unified}, the differential of the
map
\(
X_u(p)=-u(p)p+\nabla u(p)
\)
is given by
\begin{equation}\label{eq:dXu-hyperbolic}
 dX_u(Y)=\Hess u(Y)-uY
\end{equation}
for every \(Y\in T_p\HH\).  Set
\(
B_u=\Hess u-u\Id.
\)
Since \(B_u\) is self-adjoint with respect to the hyperbolic metric, the
regular set of \(X_u\) is
\(
\Omega_{\mathrm{reg}}
 =
 \{p\in\Omega:\det B_u(p)\neq0\}.
\)
Indeed, \(dX_u=B_u\), so \(X_u\) is an immersion precisely where \(B_u\)
is invertible.

For \(p\in\Omega_{\mathrm{reg}}\) and \(Y,Z\in T_p\HH\), the metric
induced by \(X_u\) is
\[
(X_u^*\langle\,,\,\rangle)(Y,Z)
 =
 \langle B_uY,B_uZ\rangle.
\]
The restriction of the Lorentzian metric to \(T_p\HH=p^\perp\) is
positive definite.  Since \(B_u\) is invertible on
\(\Omega_{\mathrm{reg}}\), the preceding identity shows that the induced
metric is positive definite.  Hence \(X_u\) is a spacelike immersion
there.

Moreover, equation~\eqref{eq:dXu-hyperbolic} implies
\(
\langle dX_u(Y),p\rangle=0,
\)
because \(B_uY\in T_p\HH=p^\perp\).  Thus \(p\) is a timelike unit normal
to the immersed surface at \(X_u(p)\).  Hence, up to the choice
of time orientation, the Gauss map of \(X_u\) is
\[
G(X_u(p))=p.
\]

We use the convention that the shape operator is \(A=dG\).  Since
\(G\circ X_u=\Id_{\Omega_{\mathrm{reg}}}\), differentiation gives
\(
dG_{X_u(p)}\circ dX_u|_p=\Id_{T_p\HH}.
\)
Therefore
\[
A\circ B_u=\Id
\quad\text{and hence}\quad
A=B_u^{-1}.
\]
Since \(B_u\) is an invertible endomorphism of a two-dimensional vector
space, Lemma~\ref{lem:trace-inverse} yields
\(\
\tr A
 =
 \tr(B_u^{-1})
 =
 \frac{\tr B_u}{\det B_u}.
\)
On the other hand,
\[
\tr B_u
 =
 \tr(\Hess u-u\Id)
 =
 \Delta_{\HH}u-2u.
\]
It follows from~\eqref{eq:H-equation} that
\(
\tr A=0.
\)
Thus the mean curvature of \(X_u\) vanishes, and \(X_u\) is a spacelike
maximal immersion on \(\Omega_{\mathrm{reg}}\).

We now prove the converse.  Let
\(
X\colon M\rightarrow\Lthree
\)
be a spacelike maximal immersion whose Gauss map
\(
G\colon M\rightarrow\HH
\)
is a local diffeomorphism.  After restricting to a sufficiently small
neighborhood, we may use the Gauss map as a parametrization and regard
\(X\) as a map
\(
X\colon\Omega\subset\HH\rightarrow\Lthree
\)
satisfying
\(
G(X(p))=p.
\)
Define the support function by
\(
u(p)=\langle X(p),p\rangle.
\)

At each \(p\in\Omega\), decompose \(X(p)\) into its normal and tangential
components relative to \(\HH\).  Since
\(\langle p,p\rangle=-1\), the normal component is
\(
X^\perp=-\langle X,p\rangle p=-up.
\)
We claim that the tangential component is \(\nabla u\).  Indeed, for
\(Y\in T_p\HH\), differentiation of \(u=\langle X,p\rangle\) gives
\(
Y(u)
 =
 \langle dX(Y),p\rangle+\langle X,Y\rangle.
\)
Because \(p\) is the Gauss normal of the surface,
\(dX(Y)\perp p\), and hence
\(
Y(u)=\langle X,Y\rangle
     =\langle X^\top,Y\rangle.
\)  
Therefore
\(
X^\top=\nabla u,
\)
and so
\begin{equation}\label{eq:inverse-gauss-support}
 X=-up+\nabla u.
\end{equation}

Applying Proposition~\ref{prop:support-unified}, or differentiating
\eqref{eq:inverse-gauss-support} directly, we obtain
\[
dX=\Hess u-u\Id=B_u.
\]
Since \(X\) is an immersion and \(G\) is a local diffeomorphism, \(B_u\)
is invertible.  As above, its shape operator is
\(
A=B_u^{-1}.
\)
The maximality condition gives
\[
0=\tr A=\tr(B_u^{-1})
       =\frac{\tr B_u}{\det B_u}.
\]
Since \(\det B_u\neq0\), we conclude that
\(
0=\tr B_u=\Delta_{\HH}u-2u.
\)
Thus the support function satisfies
\[
\Delta_{\HH}u-2u=0,
\]
and the inverse Gauss parametrization is precisely
\(
X(p)=-u(p)p+\nabla u(p).
\)
\end{proof}

\subsection{Constant boundary data and Lorentzian capillarity}

Let the domain $\Omega\subset\HH$ have smooth boundary, and let $\nu$ be its exterior
unit conormal.  Suppose
\begin{equation}\label{eq:H-boundary-data}
 u=a,\quad \partial_\nu u=b
 \quad\text{on}\quad\partial\Omega.
\end{equation}
Since the tangential derivative of the solution vanishes, $\nabla u=b\nu$ on the boundary.
Thus
\begin{equation}\label{eq:H-boundary-norm}
 \langle X_u,X_u\rangle=b^2-a^2,
 \quad \langle X_u,p\rangle=u=a.
\end{equation}

If $b^2-a^2>0$, the boundary lies on a de Sitter quadric.  If
$b^2-a^2<0$, it lies on a hyperbolic quadric.  Equality corresponds to the
light cone.  In either nondegenerate case, the second identity in
\eqref{eq:H-boundary-norm} shows that the maximal surface meets the support
quadric at constant Lorentzian angle.

\begin{proposition}\label{prop:H-capillary}
Assume $a^2\neq b^2$.  A solution of \eqref{eq:H-equation} and
\eqref{eq:H-boundary-data} whose support map is regular along the boundary
generates a spacelike maximal surface with boundary on a nondegenerate totally
umbilical quadric centered at the origin, meeting that quadric at constant
contact angle.
\end{proposition}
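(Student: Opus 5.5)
The plan is to read off each part of the conclusion from Theorem~\ref{thm:H-representation} and the boundary identities \eqref{eq:H-boundary-norm}, with a short computation for the angle.

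\textbf{Step 1: regularity.} Since $\det B_u\neq0$ along $\partial\Omega$ by hypothesis, and this is an open condition, $B_u$ stays invertible on a collar of $\partial\Omega$ in $\cl\Omega$. By Theorem~\ref{thm:H-representation}, $X_u$ is a spacelike maximal immersion there, with timelike unit normal $p$. Its interior branch points are isolated. The boundary curve $X_u(\partial\Omega)$ is therefore a regular curve bounding a branched maximal surface.

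\textbf{Step 2: the boundary quadric.} By \eqref{eq:H-boundary-norm}, $X_u(\partial\Omega)\subset\mathcal Q_\rho=\{\langle x,x\rangle=\rho\}$ with $\rho=b^2-a^2\neq0$. This is a de Sitter sphere if $\rho>0$ and a two-sheeted hyperboloid if $\rho<0$. In either case it is nondegenerate, centered at the origin, and totally umbilical: its position vector $x$ is normal, with $\langle x,x\rangle=\rho\ne0$, and the shape operator is $\pm|\rho|^{-1/2}\Id$.

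\textbf{Step 3: the contact angle.} The unit normal to $\mathcal Q_\rho$ at $X_u(p)$ is $N=X_u/\sqrt{|\rho|}$, and the normal to the surface is $p$. Hence
\[
\langle N,p\rangle=\frac{\langle X_u,p\rangle}{\sqrt{|\rho|}}=\frac{a}{\sqrt{|b^2-a^2|}},
\]
which is constant along the boundary. This is the constant Lorentzian contact angle.

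To make the angle meaningful I would also describe it in terms of the surface conormal. Differentiating along the boundary with unit tangent $T$ gives $dX_u(T)=B_uT=-aT+b\nabla_T\nu$. This vector is tangent to $\mathcal Q_\rho$, because $\langle X_u,X_u\rangle$ is constant there. So $\mathcal Q_\rho$ and the surface share the tangent line of the boundary curve, and their normal planes both contain $p$ and $N$. The angle between the two normals is therefore the angle between the surfaces along the curve.

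\textbf{Expected obstacle.} The delicate point is whether the angle is of hyperbolic or circular type. Let $\sigma$ be the causal character of the normal plane spanned by $N$ and $p$. In the case $\rho>0$, $N$ is spacelike and $p$ timelike, so $\langle N,p\rangle=\sinh\theta$. In the case $\rho<0$, both are timelike, so $\langle N,p\rangle=\pm\cosh\theta$. The second case needs the reverse Cauchy--Schwarz inequality $|\langle N,p\rangle|\ge1$, which reads $a^2\ge a^2-b^2$ and holds automatically. With the appropriate hyperbolic angle in each case, the contact angle is constant, which completes the proof.
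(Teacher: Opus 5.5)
Your proposal is correct and follows the paper's argument. The paper's justification is the paragraph before the proposition: $\nabla u=b\nu$ on $\partial\Omega$ gives $\langle X_u,X_u\rangle=b^2-a^2$ and $\langle X_u,p\rangle=a$, which yield the nondegenerate quadric and the constant value $a/\sqrt{|b^2-a^2|}$ of $\langle N,p\rangle$, exactly as in your Steps 2 and 3. Your extra checks, namely that $B_uT=-aT+b\nabla_T\nu$ is tangent to the quadric and the $\sinh$ versus $\cosh$ type of the angle with the reverse Cauchy--Schwarz inequality, are correct details that the paper leaves implicit.
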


Next, we establish the following Souam-type rigidity result in the hyperbolic setting.

\begin{theorem}\label{thm:H-rigidity}
Let $\Omega\subset\HH$ be a bounded simply connected $C^{2,\alpha}$ domain.
Suppose that a nonzero function
$u\in C^2(\Omega)\cap C^1(\cl\Omega)$ satisfies
\begin{equation*}\label{eq:H-overdetermined}
 \Delta u-2u=0\quad\text{in}\quad\Omega,
 \qquad u=a,\quad \partial_\nu u=b
 \quad\text{on}\quad\partial\Omega,
\end{equation*}
with $a^2\neq b^2$.  Then $\Omega$ is a geodesic disk and
\(
 u=A\cosh r,
\)
where $r$ is the distance from its center.  In particular,
$a^2-b^2=A^2>0$.
\end{theorem}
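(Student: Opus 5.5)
The plan is to follow Souam's strategy, replacing the minimal immersion in $\mathbb R^3$ by the branched maximal surface $X_u$ of Theorem~\ref{thm:H-representation}.

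\textbf{Step 1 (the maximal disk).} Since $\Omega$ is simply connected, $X_u\colon\cl\Omega\to\Lthree$ is a branched spacelike maximal disk with isolated interior branch points. By \eqref{eq:H-boundary-norm}, $X_u(\partial\Omega)$ lies on the quadric $\langle x,x\rangle=b^2-a^2$. The function $\langle X_u,p\rangle$, where $p$ is the Gauss normal, equals the constant $a$ on $\partial\Omega$. This is the constant Lorentzian contact angle recorded in Proposition~\ref{prop:H-capillary}.

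\textbf{Step 2 (regularity at the boundary).} Proposition~\ref{prop:H-capillary} needs $B_u$ to be invertible along $\partial\Omega$, and this is the first delicate point. On $\partial\Omega$ we have $\nabla u=b\nu$. In a boundary frame $(T,\nu)$ this gives
\[
B_u(T,T)=b\,\kappa_g-a ,
\]
where $\kappa_g$ is the geodesic curvature of $\partial\Omega$. The trace-free condition then fixes $B_u(\nu,\nu)=a-b\kappa_g$. Only the off-diagonal entry $B_u(T,\nu)=T(\partial_\nu u)-\langle\nabla_T\nu,\nabla u\rangle\cdot(\text{tangential part})$ remains, and it vanishes because $\nabla u$ is normal and $\partial_\nu u$ is constant. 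Hence
\[
\det B_u=-(a-b\kappa_g)^2 .
\]
So $X_u$ is regular at boundary points where $b\kappa_g\ne a$.

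I will not assume regularity holds everywhere on the boundary. Instead, I will work with the Hopf differential $q\,dz^2$ of $B_u$, which is holomorphic by the lemma of Section~\ref{sec:ambient}. On the boundary, the computation above shows that $B_u$ is diagonal in the frame $(T,\nu)$. Equivalently, $\operatorname{Im}(q\,dz^2)$ restricted to $T\partial\Omega$ vanishes, so $q\,dz^2(T,T)$ is real.

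\textbf{Step 3 (Hopf-type argument).} This step replaces the capillary theorems of Al\'ias--Pastor and Pyo--Seo; alternatively, I would cite those results directly. Uniformize $\Omega$ conformally onto the unit disk $\mathbb D$; this is possible since $\Omega$ is simply connected with $C^{2,\alpha}$ boundary. In the coordinate $w=e^{i\theta}$, the real-boundary condition says that $w^2q(w)$ is real on $|w|=1$. A holomorphic function on $\mathbb D$ with real boundary values, continuous up to the boundary, is constant. Evaluating at $w=0$ gives $w^2q\equiv0$, hence $q\equiv0$.

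The main obstacle is this boundary continuity. We have $u\in C^1(\cl\Omega)$ only, so $q$, which involves second derivatives, need not extend continuously. I would first upgrade regularity by elliptic boundary regularity for the Dirichlet problem $\Delta u-2u=0$, $u=a$, on a $C^{2,\alpha}$ domain, obtaining $u\in C^{2,\alpha}(\cl\Omega)$. This makes $w^2q$ continuous on $\cl{\mathbb D}$, so the argument above applies.

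\textbf{Step 4 (identifying $u$).} From $q\equiv0$ and $\tr B_u=0$ we get $B_u\equiv0$, that is, $\Hess u=u\,g$. Then $X_u$ is constant, with $dX_u=0$; call the constant $X_u\equiv c$. Hence
\[
u(p)=\langle c,p\rangle,\qquad \langle c,c\rangle=b^2-a^2 .
\]
Suppose $c$ were spacelike or lightlike. Then $u=\langle c,p\rangle$ would have nonzero gradient everywhere, since $\nabla u=c+\langle c,p\rangle p$ vanishes only if $c\parallel p$. In that case $u$ cannot be constant on a closed curve bounding a disk: a nonconstant function constant on $\partial\Omega$ has an interior extremum, where $\nabla u=0$. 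This is a contradiction, and $c=0$ is excluded since $u\ne0$.

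Therefore $c$ is timelike, with $c=-A\,o$ for some $o\in\HH$ and $A\ne0$. Then $u=A\cosh r$, where $r$ is the distance from $o$. The level set $\{u=a\}$ containing $\partial\Omega$ is a geodesic circle centered at $o$, so $\Omega$ is that geodesic disk. Finally,
\[
a^2-b^2=-\langle c,c\rangle=A^2>0 .
\]
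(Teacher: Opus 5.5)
Your proof is correct. It runs the same Hopf mechanism as the paper, but intrinsically on $\Omega$ rather than on the surface.

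The paper argues on the surface side. It places $X_u(\partial\Omega)$ on the quadric $\langle x,x\rangle=b^2-a^2$ with constant contact angle, and uses the Lorentzian Terquem--Joachimsthal theorem (or the capillary results of Al\'ias--Pastor and Pyo--Seo) to make the boundary a line of curvature. It then applies the Hopf argument to a conformal parametrization of the branched maximal disk. Finally it passes from totally umbilical to planar, which contradicts the Gauss map being the inclusion unless $X_u$ is constant.

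You stay on $\Omega$ instead. You read off $B_u(T,\nu)=0$ directly from the Cauchy data, and the holomorphic differential $(B_u)^{2,0}$ of Section~\ref{sec:ambient} then gives $B_u\equiv0$, hence $X_u\equiv c$, in one step. The two differentials are in fact the same object: in inverse Gauss coordinates the second fundamental form of $X_u$ is $B_u$, and the induced metric is $-\det B_u\,g_{\HH}$.

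Your formulation nevertheless buys several things:
\begin{itemize}
\item It needs no regularity of $X_u$ along $\partial\Omega$, which Proposition~\ref{prop:H-capillary} presupposes and the paper does not check.
\item It justifies continuity of the differential up to the boundary via the Schauder upgrade to $C^{2,\alpha}(\cl\Omega)$. The paper uses this continuity with only $u\in C^1(\cl\Omega)$.
\item It replaces the level-set compactness discussion with a short critical-point argument showing $c$ is timelike.
\item Most notably, it never uses $a^2\neq b^2$. Your argument shows every nonzero solution is $A\cosh r$ on a geodesic disk, so $a^2-b^2=A^2>0$. Hence the lightlike case left open in Remark~\ref{rem:lightlike-H} admits only $u\equiv0$.
\end{itemize}
In exchange, the paper's route gives the geometric capillary picture and the link with the Lorentzian literature.

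Three small repairs are needed.
\begin{itemize}
\item Your displayed expression for $B_u(T,\nu)$ is garbled. It should read $\Hess u(T,\nu)=T(\partial_\nu u)-\langle\nabla u,\nabla_T\nu\rangle=-b\langle\nu,\nabla_T\nu\rangle=0$.
\item In the regularity upgrade, state explicitly that $u$ coincides with the $C^{2,\alpha}(\cl\Omega)$ solution of the Dirichlet problem. This follows from the maximum principle, since the zeroth-order coefficient $-2$ is nonpositive.
\item Cite Kellogg--Warschawski for the $C^1$ extension of the Riemann map to $\cl{\mathbb D}$ with nonvanishing derivative. This is what makes the pulled-back $w^2q$ continuous on the closed disk.
\end{itemize}
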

\begin{proof}
Consider the support map
\(
X_u\colon\Omega\rightarrow\Lthree,
\,
X_u(p)=-u(p)p+\nabla u(p).
\)
By Theorem~\ref{thm:H-representation}, its restriction to the regular set
\(
\Omega_{\mathrm{reg}}
 =
 \left\{p\in\Omega:
 \det\bigl(\Hess u-u\Id\bigr)(p)\neq0\right\}
\)
is a spacelike maximal immersion whose Gauss map is, up to the choice of time
orientation, the inclusion
\(
G(X_u(p))=p.
\)
By hypothesis, the singularities of \(X_u\) are isolated unless \(X_u\) is
constant. Since \(\Omega\) is simply connected, \(X_u\) may therefore be
regarded as a branched spacelike maximal disk.

Along \(\partial\Omega\), the boundary conditions give
\(
u=a,
\,
\nabla u=b\nu,
\)
and hence
\[
\langle X_u,X_u\rangle=b^2-a^2,
\quad
\langle X_u,p\rangle=a.
\]
Since \(a^2\neq b^2\), the boundary of the associated maximal disk lies on a
nondegenerate totally umbilical quadric of \(\Lthree\). Moreover, the second
identity shows that the maximal disk meets this quadric at a constant
Lorentzian contact angle; see
Proposition~\ref{prop:H-capillary}. The Lorentzian
Terquem--Joachimsthal theorem then implies that every regular component of the
boundary is a line of curvature of the maximal surface.

We next show that \(X_u\) is totally umbilical. In the regular case, one may directly invoke the
disk-type capillary rigidity theorem of Alías--Pastor
\cite{AliasPastor1999}, or its extension by Pyo--Seo
\cite{PyoSeo2011}. For completeness, we recall the Hopf-differential
argument, which also accommodates isolated branch points.

Let
\(
X\colon\overline{\mathbb D}\rightarrow\Lthree
\)
be a conformal parametrization of the branched maximal disk, and let
\(
\Phi(z)\,dz^2
\)
denote its Hopf differential. Since \(X\) has constant mean curvature,
\(\Phi\) is holomorphic on the regular set. It extends holomorphically across
the isolated branch points. Because the boundary is a line of curvature, the
mixed coefficient of the second fundamental form vanishes along
\(\partial\mathbb D\). Equivalently,
\[
\operatorname{Im}\bigl(z^2\Phi(z)\bigr)=0
\quad\text{on}\quad\partial\mathbb D.
\]
The function
\(
\operatorname{Im}\bigl(z^2\Phi(z)\bigr)
\)
is harmonic on \(\mathbb D\) and continuous on \(\overline{\mathbb D}\).
The maximum principle therefore gives
\(
\operatorname{Im}\bigl(z^2\Phi(z)\bigr)=0
\) in \(\mathbb{D}\).
Hence the holomorphic function \(z^2\Phi(z)\) is real-valued and must be
constant. Since it vanishes at \(z=0\), it follows that
\[
z^2\Phi(z)\equiv0,
\quad\text{and therefore}\quad
\Phi\equiv0.
\]
Thus the maximal disk is totally umbilical. Since its mean curvature vanishes,
its second fundamental form vanishes identically on the regular set.
Consequently, every regular component of \(X_u\) is contained in a spacelike
affine plane and has constant Gauss map.

This conclusion is incompatible with the inverse Gauss parametrization unless
\(X_u\) is constant. Indeed, if \(X_u\) possessed a regular point, then on a
neighborhood of that point its Gauss map would simultaneously be constant,
because the image is planar, and equal to the identity
by the construction of the support map. This is impossible on an open subset
of \(\HH\). Hence \(X_u\) has no regular points. Since the singular points are assumed to be isolated unless the map is constant, we conclude that
\(
X_u\equiv q
\)
for some fixed vector \(q\in\Lthree\).

Using the support identity
\(
u(p)=\langle X_u(p),p\rangle,
\)
we obtain
\begin{equation}\label{eq:u-linear-hyperbolic}
u(p)=\langle q,p\rangle.
\end{equation}
The vector \(q\) is nonzero because \(u\) is nontrivial. Moreover,
\(\partial\Omega\) is contained in the level set
\[
\left\{p\in\HH:\langle q,p\rangle=a\right\}.
\]
Since \(\partial\Omega\) is compact, \(q\) must be timelike. Indeed, the level
sets associated with spacelike vectors are equidistant curves, while those
associated with lightlike vectors are horocycles; neither type is compact.
Only a timelike vector yields a geodesic circle.

After applying a Lorentz isometry preserving \(\HH\), we may therefore write
\[
q=-Ae_3,
\quad
A\neq0,
\]
where \(e_3=(0,0,1)\) is the future-directed unit timelike vector. If \(r\)
denotes the hyperbolic distance from the point \(e_3\in\HH\), then
\(
-\langle p,e_3\rangle=\cosh r.
\)
Equation~\eqref{eq:u-linear-hyperbolic} becomes
\[
u(p)=A\cosh r,
\]
where
\(
r(p)=d_{\mathbb H^2}(p,e_3).
\)
Since \(u=a\) along \(\partial\Omega\), it follows that
\(
A\cosh r(p)=a,
\, p\in\partial\Omega.
\)
As \(A\neq0\) and \(\cosh r\) is strictly increasing for \(r\geq0\), there exists a unique \(R>0\) such that
\(
\cosh R=\frac{a}{A},
\)
and hence
\[
\partial\Omega\subset C_R=\{p\in\mathbb H^2:r(p)=R\}.
\]
Since
\(\partial\Omega\) is a compact regular closed curve contained in the
one-dimensional connected manifold \(C_R\), it is both open and closed in
\(C_R\). Therefore,
\(
\partial\Omega=C_R.
\)

The geodesic circle \(C_R\) separates \(\mathbb H^2\) into exactly two
connected components,
\(
\{r<R\}\)
and \linebreak
\(\{r>R\}.
\)
Since \(\Omega\) is connected and bounded and
\(\partial\Omega=C_R\), it follows that
\[
\Omega=\{p\in\mathbb H^2:r(p)<R\},
\]
 that is, \(\Omega\) is the geodesic disk of radius \(R\) centered at \(e_3\).

Finally, along \(\partial\Omega\), the exterior unit conormal is
\(\nu=\partial_r\). Therefore
\[
a=A\cosh R,
\quad
b=\partial_\nu u=A\sinh R.
\]
It follows that
\[
a^2-b^2
 =
A^2\bigl(\cosh^2R-\sinh^2R\bigr)
 =
A^2>0.
\]
This proves both the rigidity of the domain and the asserted form of the
solution.
\end{proof}

\begin{remark}\label{rem:lightlike-H}
When $a^2=b^2$, the image of the boundary lies on the light cone.  The light cone is degenerate and is neither a spacelike nor a timelike totally umbilical support surface in the sense required by the standard capillary theorem.  The proof
above therefore does not cover this case.  Note that the geodesic-disk model never satisfies $a^2=b^2$ unless $u\equiv0$.
\end{remark}

\begin{remark}
Theorem~\ref{thm:H-rigidity} is also related to the Hopf-type framework of
Espinar--Mazet~\cite{EM19}. Indeed, after changing the sign of \(u\), if
necessary, and setting \(w=a-u\), the problem becomes
\[
\Delta_{\HH}w+2a-2w=0,
\quad
w=0,\quad \partial_\nu w=-b
\quad\text{on}\quad\partial\Omega.
\]
For the case \(a^2>b^2\), the method of Espinar--Mazet provides an alternative
Hopf-type interpretation of the disk rigidity. Our proof follows a different
geometric approach and also includes the spacelike case \(a^2<b^2\), which is
ruled out by the Lorentzian geometry of the associated support-surface.
\end{remark}

\section{The de Sitter plane}\label{sec:desitter}

\subsection{The support equation and timelike minimal surfaces}

For a domain $\Omega\subset\dS$ and $u\in C^2(\Omega)$, set
\begin{equation}\label{eq:X-dS}
 X_u(p)=u(p)p+\nabla u(p)
\end{equation}
and \(B_u=\Hess u+u\Id.\) In the following, we provide a geometric interpretation of the solutions of a linear hyperbolic PDE.

\begin{theorem}\label{thm:dS-representation}
Let $\Omega\subset\dS$ be a domain and let $u\in C^2(\Omega)$ satisfy
\begin{equation}\label{eq:dS-equation}
 \Box_{\dS}u+2u=0.
\end{equation}
Then, on the regular set
\(
\Omega_{\rm reg}
=
\bigl\{p\in\Omega:
\det(\Hess u+u\Id)\neq0
\bigr\},
\)
the map $X_u$ defined by~\eqref{eq:X-dS}  is a timelike
minimal immersion in $\Lthree$ and its spacelike Gauss map is the inclusion
\(
\Omega_{\rm reg}\hookrightarrow\dS.
\)
Conversely, let $X$ be a timelike minimal surface in $\Lthree$ whose
Gauss map is locally invertible away from its branch points. Then its inverse
Gauss parametrization is locally of the form~\eqref{eq:X-dS}, and
the corresponding support function satisfies~\eqref{eq:dS-equation}.
\end{theorem}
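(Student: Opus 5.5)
The plan is to follow the proof of Theorem~\ref{thm:H-representation} line by line, with $\varepsilon=1$ and attention to the signature changes. First, Proposition~\ref{prop:support-unified} gives $dX_u=B_u=\Hess u+u\Id$. Since $B_u$ is an endomorphism of $T_p\dS$, $X_u$ is an immersion exactly on $\Omega_{\rm reg}$. For $Y,Z\in T_p\dS$ the induced metric is $\langle B_uY,B_uZ\rangle$, the pullback of the Lorentzian metric of $T_p\dS=p^\perp$ under the invertible linear map $B_u$. It is therefore nondegenerate of signature $(+,-)$, so $X_u$ is a timelike immersion on $\Omega_{\rm reg}$. Since $dX_u(Y)\in p^\perp$, the vector $p$ is a unit spacelike normal ($\langle p,p\rangle=1$), so up to orientation the Gauss map is $G(X_u(p))=p$, i.e.\ the inclusion.

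Next I would compute the shape operator. Differentiating $G\circ X_u=\Id$ gives $A\circ B_u=\Id$, so $A=B_u^{-1}$. Lemma~\ref{lem:trace-inverse} is purely linear algebra and holds regardless of signature. It yields $\tr A=\tr B_u/\det B_u$, and $\tr B_u=\Box_{\dS}u+2u=0$. The one point to check is that on a Lorentzian surface the mean curvature is still (a multiple of) the trace of the shape operator, the trace being the endomorphism trace. This holds since $\tr(A)$ does not depend on the metric. Hence $H=0$ and $X_u$ is timelike minimal on $\Omega_{\rm reg}$.

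For the converse, I let $X$ be timelike minimal with Gauss map $G\colon M\to\dS$ a local diffeomorphism near a chosen point. I reparametrize by $G$, so $X\colon\Omega\subset\dS\to\Lthree$ with $G(X(p))=p$, and set $u=\langle X,p\rangle$. Because $\langle p,p\rangle=1$, the normal component of $X$ relative to $\dS$ is $\langle X,p\rangle p=up$. For $Y\in T_p\dS$, we have $Y(u)=\langle dX(Y),p\rangle+\langle X,Y\rangle=\langle X^\top,Y\rangle$. The metric on $T_p\dS$ is nondegenerate even though it is indefinite, so this identifies $X^\top=\nabla u$, giving $X=up+\nabla u$. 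Then $dX=B_u$ is invertible, $A=B_u^{-1}$, and $0=\tr A=\tr B_u/\det B_u$ forces $\Box_{\dS}u+2u=0$.

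The main obstacle is conceptual rather than computational. We must make sure that every step using positivity in the hyperbolic case only needs nondegeneracy here. These steps are: the identification of $X^\top$ via the Riesz isomorphism, the fact that a timelike surface has a well-defined spacelike unit normal, and the meaning of minimality as $\tr A=0$. Unlike Theorem~\ref{thm:H-representation}, there is no holomorphic differential, so no claim is made that $\Omega\setminus\Omega_{\rm reg}$ is discrete. The statement is restricted to $\Omega_{\rm reg}$, and I would not attempt to prove more.
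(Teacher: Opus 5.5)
Your proposal is correct and follows the paper's proof essentially step for step: $dX_u=B_u$ from Proposition~\ref{prop:support-unified}, then $A=B_u^{-1}$ from $G\circ X_u=\Id$, then Lemma~\ref{lem:trace-inverse} to get $\tr A=(\Box_{\dS}u+2u)/\det B_u$, and for the converse the decomposition $X=up+\nabla u$ with $u=\langle X,p\rangle$. Your explicit checks fill in what the paper asserts without comment: the induced metric is Lorentzian because it is the pullback of the Lorentzian metric on $p^\perp$ under the invertible $B_u$, and identifying $X^\top=\nabla u$ needs only nondegeneracy, not positivity.
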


\begin{proof}
By Proposition~\ref{prop:support-unified},
\(
dX_u=B_u:=\Hess u+u\Id.
\)
Hence, on the open set where \(B_u\) is invertible, \(X_u\) is a timelike
immersion and \(p\in\dS\) is its unit spacelike normal. Since
\(G\circ X_u=\Id\), where \(G\) denotes the Gauss map, the shape operator is
\(
A=B_u^{-1}.
\)
Lemma~\ref{lem:trace-inverse} then gives
\[
2H=\tr A
   =\frac{\tr B_u}{\det B_u}
   =\frac{\Box_{\dS}u+2u}{\det B_u}.
\]
Thus \(H=0\) whenever \(u\) satisfies \eqref{eq:dS-equation}.

Conversely, let \(X\) be a timelike minimal immersion whose Gauss map is
locally invertible, and parametrize \(X\) by its Gauss map. Defining the
support function by
\(
u(p)=\langle X(p),p\rangle,
\)
the decomposition of \(X\) into its normal and tangential components yields
\[
X=up+\nabla u;
\]
this is the pseudo-Riemannian support-function formula of Reznikov
\cite{Reznikov1992}. Hence,
\(
dX=\Hess u+u\Id=B_u.
\)
Since \(X\) is minimal,
\(
0=\tr(B_u^{-1})
 =\frac{\tr B_u}{\det B_u},
\)
and therefore
\(
\Box_{\dS}u+2u=0.
\)
\end{proof}

Unlike in the elliptic situation, the singular set of $X_u$ need not be discrete.
Solutions of a hyperbolic equation can have support tensors that degenerate along curves; this agrees with the richer singularity theory of generalized timelike
minimal surfaces.

\subsection{Boundary quadrics and constant angle}

Let $\Gamma\subset\dS$ be a non-null curve.  Choose a unit conormal $\nu$ and
write
\[
 \langle\nu,\nu\rangle=\delta\in\{-1,1\}.
\]
Suppose
\begin{equation}\label{eq:dS-data}
 u=a,\quad \partial_\nu u=b
 \quad\text{on}\quad\Gamma.
\end{equation}
Then $\nabla u=\delta b\nu$ on $\Gamma$, and hence
\begin{equation*}\label{eq:dS-boundary-norm}
 \langle X_u,X_u\rangle=a^2+\delta b^2,
 \quad \langle X_u,p\rangle=a.
\end{equation*}

The next result shows that the prescribed Cauchy data imply the capillary boundary condition.

\begin{proposition}\label{prop:dS-capillary}
Let $u$ solve \eqref{eq:dS-equation} and satisfy \eqref{eq:dS-data} on a
noncharacteristic curve $\Gamma$.  Set
\[
 \rho=a^2+\delta b^2.
\]
Where $X_u$ is regular along $\Gamma$, its boundary lies on
\[
 \mathcal Q_\rho=\{x\in\Lthree:\langle x,x\rangle=\rho\}.
\]
If $\rho\neq0$, this is a nondegenerate totally umbilical quadric and $X_u$
meets it at constant Lorentzian contact angle.  If $\rho=0$, the boundary lies
on the light cone.
\end{proposition}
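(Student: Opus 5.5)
The argument is a direct computation along $\Gamma$, paralleling the derivation of \eqref{eq:H-boundary-norm} on $\HH$, followed by a short geometric interpretation.

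\textbf{Step 1: the gradient on $\Gamma$.} Since $\Gamma$ is non-null, at each $p\in\Gamma$ the plane $T_p\dS$ splits orthogonally as $T_p\Gamma\oplus\mathbb R\nu$. Because $u\equiv a$ on $\Gamma$, the tangential derivative of $u$ vanishes, so $\nabla u$ is a multiple of $\nu$. The coefficient is read off from $\partial_\nu u=\langle\nabla u,\nu\rangle=b$ together with $\langle\nu,\nu\rangle=\delta$, which gives $\nabla u=\delta b\nu$.

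\textbf{Step 2: the boundary quadric.} From Step 1 we get $X_u=ap+\delta b\nu$ on $\Gamma$. Using $\langle p,p\rangle=1$, $\langle p,\nu\rangle=0$ and $\delta^2=1$,
\[
\langle X_u,X_u\rangle=a^2+\delta b^2=\rho .
\]
Hence the boundary curve $X_u(\Gamma)$ lies on $\mathcal Q_\rho$. If $\rho=0$ this set is the light cone, and that alternative of the statement is proved. If $\rho\neq0$, then $\mathcal Q_\rho$ is a rescaled de Sitter or hyperbolic quadric, of radius $\sqrt{|\rho|}$, centered at the origin. Its position vector $x$ is a non-null normal field, and its shape operator is a multiple of the identity, so it is a nondegenerate totally umbilical quadric.

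\textbf{Step 3: constant contact angle.} Where $B_u$ is invertible along $\Gamma$, Theorem~\ref{thm:dS-representation} shows that $X_u$ is a timelike minimal immersion with unit spacelike normal $p$. The unit normal of $\mathcal Q_\rho$ at $X_u(p)$ is $N=X_u/\sqrt{|\rho|}$. Proposition~\ref{prop:support-unified} gives $\langle X_u,p\rangle=u=a$, so
\[
\langle N,p\rangle=\frac{a}{\sqrt{|\rho|}}
\]
is constant along the boundary. This pairing of the two unit normals is the Lorentzian contact angle, which is therefore constant.

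\textbf{Main obstacle.} The only delicate point is the meaning of ``Lorentzian angle'' when $N$ and $p$ have different causal characters or span a degenerate plane. I would simply define the contact condition as constancy of $\langle N,p\rangle$, i.e.\ hyperbolic or circular angle according to the causal types. The noncharacteristic hypothesis enters only to guarantee that $\nu$ is non-null, which is what justifies the splitting used in Step 1.
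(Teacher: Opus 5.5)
Your proposal is correct and follows the paper's own argument. The paper likewise uses the constancy of $u$ on $\Gamma$ to get $\nabla u=\delta b\nu$, deduces $\langle X_u,X_u\rangle=a^2+\delta b^2$, and reads the constant contact angle off the support identity $\langle X_u,p\rangle=a$; your explicit unit normal $X_u/\sqrt{|\rho|}$ of $\mathcal Q_\rho$ and your definition of the Lorentzian angle as constancy of $\langle N,p\rangle$ just spell out what the paper leaves implicit.
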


Thus the causal type of the conormal matters.  If $\Gamma$ is timelike, then
$\nu$ is spacelike and $\rho=a^2+b^2>0$.  If $\Gamma$ is spacelike, then
$\nu$ is timelike and $\rho=a^2-b^2$, allowing all three causal types.

\subsection{Local flexibility}

The equation \eqref{eq:dS-equation} is hyperbolic.  Constant
Dirichlet and conormal data along a noncharacteristic curve are therefore
Cauchy data, not elliptic overdetermination.
\medskip

The next result demonstrates the failure of local rigidity for the corresponding hyperbolic PDE.

\begin{theorem}\label{thm:dS-flexibility}
Let $\Gamma\subset\dS$ be an analytic noncharacteristic curve and let $a,b$ be
arbitrary real constants.  There exists a unique analytic solution of
\[
 \Box u+2u=0,
 \quad u|_\Gamma=a,
 \quad \partial_\nu u|_\Gamma=b
\]
in a neighborhood of $\Gamma$.  Consequently, the existence of constant
Cauchy data does not force $\Gamma$ to be a de Sitter circle, a geodesic, or an
orbit of a one-parameter isometry group.
\end{theorem}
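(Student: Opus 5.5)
The plan is to reduce the statement to the Cauchy--Kovalevskaya theorem applied in analytic local coordinates adapted to $\Gamma$, and then check that the "consequently" part is immediate because $\Gamma$ was arbitrary.

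\textbf{Step 1: local coordinates adapted to $\Gamma$.} Since $\dS$ is a real-analytic submanifold of $\Lthree$ with analytic induced metric, near any point of $\Gamma$ we choose analytic coordinates $(s,\tau)$ in which $\Gamma=\{\tau=0\}$. One concrete choice is a geodesic normal (Fermi-type) chart built from the unit conormal $\nu$. Because $\Gamma$ is noncharacteristic, it is non-null, so $\nu$ is defined with $\langle\nu,\nu\rangle=\delta\neq0$. The map $(s,\tau)\mapsto\exp_{\gamma(s)}(\tau\nu(s))$ is then analytic and is a local diffeomorphism near $\tau=0$. In these coordinates:
\begin{itemize}
\item the metric is $g=\delta\,d\tau^2+h(s,\tau)\,ds^2$ with $h(s,0)\neq0$ of sign $-\delta$;
\item the operator becomes $\Box u=\delta u_{\tau\tau}+h^{-1}u_{ss}+(\text{lower order analytic terms})$.
\end{itemize}
Since the coefficient of $u_{\tau\tau}$ is $\delta\neq0$, the noncharacteristic condition is exactly what lets us solve for $u_{\tau\tau}$:
\[
u_{\tau\tau}=F(s,\tau,u,u_s,u_\tau,u_{ss},u_{s\tau}),
\]
with $F$ analytic and linear. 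The Cauchy data become $u(s,0)=a$ and $u_\tau(s,0)=b$. These are analytic, and $\partial_\nu=\partial_\tau$ on $\Gamma$.

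\textbf{Step 2: local existence and uniqueness.} Cauchy--Kovalevskaya gives a unique analytic solution near each point of $\Gamma$. Local uniqueness makes solutions on overlapping charts agree, because both are analytic and coincide near the common part of $\Gamma$. So the local solutions patch together into an analytic solution on a neighborhood of all of $\Gamma$. If $\Gamma$ is not compact, the neighborhood may be thin, but it still exists.

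Uniqueness within the analytic class is Cauchy--Kovalevskaya uniqueness. Alternatively, one can invoke Holmgren's theorem, or energy estimates for this hyperbolic equation, to get uniqueness even among $C^2$ solutions.

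\textbf{Step 3: the consequence.} The curve $\Gamma$ was an arbitrary analytic noncharacteristic curve, for instance a generic analytic graph that is not an orbit of any isometry subgroup. Each such curve carries a solution with constant Cauchy data. Therefore these data cannot force $\Gamma$ to be a circle, a geodesic, or an orbit. Proposition~\ref{prop:dS-capillary} then shows that the capillary condition holds wherever $X_u$ is regular.

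\textbf{Main obstacle.} The only delicate point is Step 1: verifying that the coordinate chart is analytic and that noncharacteristic is equivalent to non-null. Note that $\Gamma$ is characteristic precisely when its conormal is null. Everything else is standard.
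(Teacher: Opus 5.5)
Your proposal is correct and follows essentially the same route as the paper. Both use analytic Gaussian (Fermi) coordinates along $\Gamma$ with $g_{\tau\tau}=\delta$ and $g_{s\tau}=0$, solve $\Box u+2u=0$ for $u_{\tau\tau}$ using the noncharacteristic (non-null) hypothesis, apply Cauchy--Kovalevskaya to the data $u=a$, $u_\tau=b$ near each point, and patch the local solutions by uniqueness; your appeal to Holmgren's theorem is a small addition that gives uniqueness among $C^2$ solutions, not just analytic ones.
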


\begin{proof}
Fix \(p\in\Gamma\). Since \(\Gamma\) is analytic and noncharacteristic, it is
non-null. Hence there exist analytic Gaussian coordinates \((s,t)\) in a
neighborhood \(U\) of \(p\) such that
\[
\Gamma\cap U=\{t=0\},
\quad
\partial_t|_{t=0}=\nu,
\quad
g_{st}=0,
\quad
g_{tt}=\delta,
\]
where
\(
\delta=\langle\nu,\nu\rangle\in\{-1,1\}
\)
(see \cite[Chapter~3 and Chapter~5, Section~1]{ONeill1983}). In these coordinates, the de Sitter metric has the form
\[
g=\delta\,dt^2-\delta\,\rho(s,t)^2\,ds^2
\]
for some positive analytic function \(\rho\), and so
\[
\Box u
 =
\delta\,u_{tt}
-\frac{\delta}{\rho^2}u_{ss}
+\delta\,\frac{\rho_t}{\rho}u_t
+\delta\,\frac{\rho_s}{\rho^3}u_s.
\]
Thus the equation \(\Box u+2u=0\) is equivalent to
\[
u_{tt}
=
\frac{1}{\rho^2}u_{ss}
-\frac{\rho_t}{\rho}u_t
-\frac{\rho_s}{\rho^3}u_s
-2\delta u.
\]
In particular, the equation is solved for the highest-order derivative
transverse to \(\Gamma\). This is precisely the role of the
noncharacteristic assumption.

Since \(\partial_t=\nu\) along \(t=0\), the prescribed conditions become
\[
u(s,0)=a,
\quad
u_t(s,0)=b.
\]
They are analytic Cauchy data, and all the coefficients in the equation are
analytic. The Cauchy--Kowalevski theorem
\cite{Hormander1990,John1982} therefore provides a unique analytic solution
in a neighborhood of \(p\). Uniqueness implies that the solutions obtained in
overlapping coordinate neighborhoods agree, producing a unique analytic
solution in a neighborhood of \(\Gamma\).
\end{proof}

\begin{corollary}\label{cor:dS-capillary-flexibility}
If, in addition, $\det B_u$ is nonzero at a point of $\Gamma$, then, after possibly shrinking the neighborhood, the solution given by Theorem~\ref{thm:dS-flexibility} generates a timelike minimal surface whose boundary lies on $\mathcal Q_\rho$ and has constant contact angle, provided $\rho\neq0$.
\end{corollary}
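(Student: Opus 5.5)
The proof combines Theorem~\ref{thm:dS-flexibility}, Theorem~\ref{thm:dS-representation} and Proposition~\ref{prop:dS-capillary}; the only genuine point is openness of the regular set.

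\textbf{Step 1: a regular neighborhood.} Let $u$ be the analytic solution given by Theorem~\ref{thm:dS-flexibility} on a neighborhood $U$ of $\Gamma$, and let $p_0\in\Gamma$ be a point with $\det B_u(p_0)\neq0$. Since $u$ is analytic, $B_u=\Hess u+u\Id$ is continuous, so $\det B_u\neq0$ on some open neighborhood $V\subset U$ of $p_0$. Shrinking $U$ means replacing it by $V$, and $\Gamma$ by the subarc $\Gamma\cap V$. By Theorem~\ref{thm:dS-representation}, $X_u|_V$ is a timelike minimal immersion whose spacelike Gauss map is the inclusion $V\hookrightarrow\dS$.

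\textbf{Step 2: the boundary quadric.} Along $\Gamma\cap V$ the Cauchy data are constant, so the tangential derivative of $u$ vanishes and $\nabla u=\delta b\nu$. Proposition~\ref{prop:support-unified} gives $\langle X_u,p\rangle=u=a$. Using $\langle p,p\rangle=1$, $\langle p,\nu\rangle=0$ and $\langle\nu,\nu\rangle=\delta$,
\[
\langle X_u,X_u\rangle=a^2+\delta^2 b^2\langle\nu,\nu\rangle=a^2+\delta b^2=\rho .
\]
Hence $X_u(\Gamma\cap V)\subset\mathcal Q_\rho$.

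\textbf{Step 3: constant contact angle (the main point to check).} Assume $\rho\neq0$. Then $\mathcal Q_\rho$ is nondegenerate, and its unit normal at $x=X_u(p)$ is $x/\sqrt{|\rho|}$. The unit normal of the surface at $X_u(p)$ is $p$. The Lorentzian contact angle is measured by the pairing of these normals:
\[
\left\langle \frac{X_u(p)}{\sqrt{|\rho|}},\,p\right\rangle=\frac{a}{\sqrt{|\rho|}},
\]
which is constant along $\Gamma\cap V$. The care needed here is that the angle is well defined in the appropriate hyperbolic or circular sense, according to the causal types of $p$, which is spacelike, and of $\mathcal Q_\rho$. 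This is exactly the content of Proposition~\ref{prop:dS-capillary}, which we invoke.

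We also need $X_u(\Gamma\cap V)$ to be a genuine curve. This holds because $dX_u=B_u$ is invertible on $V$, so $X_u$ restricted to $\Gamma\cap V$ is an immersed curve. This completes the corollary.
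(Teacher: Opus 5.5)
Your proof is correct and is essentially the argument the paper intends: the paper gives no separate proof of the corollary, treating it as the direct combination of Theorem~\ref{thm:dS-flexibility}, openness of $\{\det B_u\neq0\}$, Theorem~\ref{thm:dS-representation}, and the boundary identities $\langle X_u,X_u\rangle=\rho$, $\langle X_u,p\rangle=a$ underlying Proposition~\ref{prop:dS-capillary}. Your computation of the normal pairing $a/\sqrt{|\rho|}$ and your check that $X_u|_{\Gamma\cap V}$ is an immersed curve just spell that combination out.
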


This is precisely where the de Sitter theory separates from the
spherical and hyperbolic theories.  The Hopf-differential maximum argument for
a disk is replaced by propagation from Cauchy data, and the boundary curve is
not selected by the PDE.

\subsection{Global rotational rigidity}

We use the following global coordinates on $\dS$:
\[
 p(t,\theta)=(\cosh t\cos\theta,\cosh t\sin\theta,\sinh t),
\]
with respect to which
\begin{equation*}\label{eq:dS-metric}
 g=-dt^2+\cosh^2t\,d\theta^2
\end{equation*}
and
\begin{equation}\label{eq:dS-box}
 \Box u=-u_{tt}-\tanh t\,u_t+\frac1{\cosh^2t}u_{\theta\theta}.
\end{equation}

In the next result, we establish a rigidity phenomenon in the rotationally symmetric setting.
\begin{theorem}\label{thm:dS-rotational}
Let $u$ be a smooth solution of $\Box u+2u=0$ on a globally hyperbolic strip
containing the Cauchy circle $\Sigma_{t_0}=\{t=t_0\}$.  If
\[
 u|_{\Sigma_{t_0}}=a,
 \quad u_t|_{\Sigma_{t_0}}=b
\]
are constant, then $u$ is rotationally invariant:
\[
 u(t,\theta)=f(t).
\]
The function $f$ satisfies
\begin{equation*}\label{eq:dS-radial-ode}
 f''+\tanh t\,f'-2f=0.
\end{equation*}
\end{theorem}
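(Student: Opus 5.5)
The plan is to use uniqueness for the global Cauchy problem, backed up by an elementary Fourier-mode argument that avoids quoting general hyperbolic theory.

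\textbf{Step 1 (invariance).} For each angle $\alpha$, define $u_\alpha(t,\theta)=u(t,\theta+\alpha)$. The rotations $\theta\mapsto\theta+\alpha$ are isometries of $g=-dt^2+\cosh^2t\,d\theta^2$ that preserve the slices $\Sigma_t$. By \eqref{eq:dS-box}, $u_\alpha$ again solves $\Box u_\alpha+2u_\alpha=0$ on the same strip. Since $u$ and $u_t$ are constant on $\Sigma_{t_0}$, the Cauchy data of $u_\alpha$ and $u$ coincide there.

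\textbf{Step 2 (uniqueness).} Uniqueness for the Cauchy problem of this linear wave-type equation on the globally hyperbolic strip then gives $u_\alpha=u$ for all $\alpha$. Hence $u$ is independent of $\theta$.

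To make the uniqueness step self-contained, I would argue with Fourier modes instead. Set
\[
 \hat u_k(t)=\frac{1}{2\pi}\int_0^{2\pi}u(t,\theta)e^{-ik\theta}\,d\theta .
\]
Smoothness lets us differentiate under the integral, and integrating by parts in $\theta$ (the slices are closed circles, so there are no boundary terms) turns $\Box u+2u=0$ into the linear ODE
\[
 \hat u_k''+\tanh t\,\hat u_k'+\Bigl(\frac{k^2}{\cosh^2t}-2\Bigr)\hat u_k=0 .
\]
Its coefficients are smooth on the whole $t$-interval of the strip. For $k\neq0$ the constant Cauchy data give $\hat u_k(t_0)=\hat u_k'(t_0)=0$, so Picard--Lindel\"of uniqueness forces $\hat u_k\equiv0$ on that interval. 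For each fixed $t$, the smooth function $u(t,\cdot)$ has all nonzero Fourier coefficients equal to zero, so it is constant in $\theta$ by Fourier uniqueness, and $u(t,\theta)=\hat u_0(t)=:f(t)$.

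\textbf{Step 3 (the ODE).} Substituting $u=f(t)$ into \eqref{eq:dS-box}, we get $\Box u+2u=-f''-\tanh t\,f'+2f=0$, which is $f''+\tanh t\,f'-2f=0$, with $f(t_0)=a$ and $f'(t_0)=b$.

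\textbf{Main obstacle.} The only delicate point is that the strip is a product $I\times S^1$ with $I$ an interval containing $t_0$. This must hold so that each mode ODE is posed on a connected interval where uniqueness propagates from $t_0$. I would make this explicit by interpreting ``globally hyperbolic strip'' as $\{t\in I\}$, after which the argument above is routine.
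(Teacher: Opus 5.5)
Your proposal is correct, and Steps 1--2 are exactly the paper's proof: the rotations $R_\phi$ commute with $\Box+2$ and preserve the constant Cauchy data on $\Sigma_{t_0}$, and uniqueness on the globally hyperbolic strip (the paper cites B\"ar--Ginoux--Pf\"affle, Theorem 3.2.11) then forces $u\circ R_\phi=u$. Your self-contained Fourier variant is the argument sketched in the paper's closing remark, made rigorous by ODE uniqueness mode by mode; the product-strip reading $I\times S^1$ that you flag is also implicitly needed in the paper's version, since $u\circ R_\phi$ must be defined on the same strip.
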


\begin{proof}
For each \(\phi\in\mathbb R\), the map
\(
R_\phi(t,\theta)=(t,\theta+\phi)
\)
is an isometry of $\dS$, preserves the Cauchy circle
\(\Sigma_{t_0}\), and fixes its unit normal. Hence the Klein--Gordon
operator commutes with \(R_\phi\):
\[
(\Box+2)(u\circ R_\phi)
 =
\bigl((\Box+2)u\bigr)\circ R_\phi=0.
\]
Moreover, since the Cauchy data of \(u\) are constant,
\[
(u\circ R_\phi)|_{\Sigma_{t_0}}=a,
\quad
\partial_t(u\circ R_\phi)|_{\Sigma_{t_0}}=b.
\]
Thus \(u\) and \(u\circ R_\phi\) solve the same Cauchy problem. Uniqueness
for normally hyperbolic equations on globally hyperbolic Lorentzian
manifolds
\cite[Theorem~3.2.11]{BarGinouxPfaeffle2007} gives
\[
u\circ R_\phi=u
\]
throughout the strip. Since this holds for every \(\phi\), \(u\) is independent
of \(\theta\), so \(u(t,\theta)=f(t)\). Substituting this expression into
\eqref{eq:dS-box} gives
\(
-f''-\tanh t\,f'+2f=0,
\)
which concludes the proof.
\end{proof}

The elementary solution $f(t)=\sinh t$ corresponds to a constant support map.
More generally, the ODE has a two-dimensional solution space, and regularity
of the associated timelike minimal immersion is governed by
$\det(\Hess f+f\Id)$.

\begin{remark}
If
\(
u(t,\theta)=\displaystyle\sum_{m\in\mathbb Z}f_m(t)e^{im\theta}
\)
is a solution of \eqref{eq:dS-box}, then every Fourier coefficient satisfies
\begin{equation*}
f_m''+\tanh t\,f_m'
+\left(\frac{m^2}{\cosh^2t}-2\right)f_m=0.
\end{equation*}
Hence constant Cauchy data on $\Sigma_{t_0}$ force $f_m\equiv0$ for every $m\neq0$, just as in the elliptic case of Theorem~\ref{thm:H-rigidity}.
This, however, holds only along the fixed circle $\Sigma_{t_0}$. By Theorem~\ref{thm:dS-flexibility}, constant Cauchy data can in fact be prescribed on every noncharacteristic curve in $\dS$. Rigidity of the domain is thus lost precisely at the point where the elliptic argument relied on fixing the boundary curve.
\end{remark}

\section*{Funding}
The authors were partially supported by the Brazilian National Council for Scientific and Technological Development (CNPq), Brazil [Grants 402563/2023-9 and 304381/2026-8 to M.B.; 409513/2023-7 and E:60030.0000000040/2026 FAPEAL/CNPq to I.D.]. Both authors were also supported by the Coordination for the Improvement of Higher Education Personnel (CAPES), Finance Code 001.

\section*{Data availability}
Data availability is not applicable to this article, as no data sets were generated or analyzed in the course of this research.

\section*{Conflict of interest}
The authors declare that they have no conflict of interest related to this article.

\end{document}